

\documentclass[preprint,12pt]{elsarticle}




\usepackage{amssymb,latexsym,amsmath}     
\usepackage{epsf}
\usepackage{array}
\usepackage{amssymb,amscd}
\usepackage{calc}
\usepackage[mathscr]{eucal}
\usepackage{hhline}
\usepackage{tikz}
\usepackage{multirow}

\usepackage{amssymb,amsmath,amstext,amsgen,amsfonts,amsthm,mathrsfs,verbatim,url,hyperref,mathdots}
\usepackage{enumitem}

\newlist{inparaenum}{enumerate}{2}
\setlist[inparaenum]{nosep}
\setlist[inparaenum,1]{label=\bfseries\alph*.}



\newtheorem{theorem}{Theorem}[section]
\newtheorem{definition}[theorem]{Definition}
\newtheorem{lemma}[theorem]{Lemma}
\newtheorem{proposition}[theorem]{Proposition}

\def\comment#1{}
 \DeclareMathOperator\diag{diag}

\def\invddots{\mathinner{\mskip1mu\raise1pt\vbox{\kern7pt\hbox{.}}\mskip2mu
		\raise4pt\hbox{.}\mskip2mu\raise7pt\hbox{.}\mskip1mu}}


\journal{TBD}

\begin{document}

\begin{frontmatter}



\title{Lipschitz stability of $\gamma$-FOCS and RC canonical Jordan bases of real $H$-selfadjoint  matrices under small perturbations}


\author[label2]{S. Dogruer Akgul}
\author[label2]{A. Minenkova}
\author[label2]{V. Olshevsky}


\address[label2]{
	{Department of Mathematics,  University of Connecticut, Storrs CT 06269-3009,  USA. Email: sahinde.dogruer@uconn.edu, anastasiia.minenkova@uconn.edu, olshevsky@uconn.edu}}

\begin{abstract}
In 2008 Bella, Olshevsky and Prasad proved that the flipped orthogonal (FO) Jordan bases of $H$-selfadjoint matrices  are Lipschitz stable under small perturbations. In 2022, Dogruer, Minenkova and Olshevsky considered the real case, and proved that for real $H$-selfadjoint matrices there exist a more refined bases called FOCS bases. In addition to flipped orthogonality they also possess the conjugate symmetric (CS) property.  In this paper we prove that these new FOCS bases are  Lipschitz stable under small perturbations as well. We also establish the Lipschitz stability for the classical real canonical Jordan bases.
\end{abstract}

\begin{keyword}  Canonical Jordan bases \sep Indefinite inner product\sep Structure-preserving perturbations



\end{keyword}

                                 \end{frontmatter}


\section{Introduction}

\subsection{$H$-selfadjoint matrices and the affiliation relation}

Let us start with the definition of {\bf the indefinite inner product}. It is a relation denoted by $[\cdot,\cdot]$ that satisfies all the axioms of the usual inner product except for positivity.

For example,  every $n\times n$ invertible Hermitian matrix $H$
determines an indefinite inner product through
\begin{equation}\label{Hinnerproduct}
    [x,y]_H=y^*Hx,\,\text{ for }x,y\in\mathbb{C}^n.
\end{equation} The converse is also true: for any indefinite inner product in $\mathbb{C}^n$ we can find such invertible Hermitian $H$ that \eqref{Hinnerproduct} holds true. If $H$ is positive-definite then $[\cdot,\cdot]_H$ turns into the classical inner product. However, in this paper we do not impose such an assumption.

{\bf H-selfadjoint matrices.} A matrix $A$ is called \textbf{H-selfadjoint} if 
\begin{equation}\label{HSA}
     A=H^{-1}A^*H\text{ (equivalently $[Ax,y]_H=[x,Ay]_H$)}.
\end{equation}
 There are many application for such matrices and they  have been studied by many authors (e.g. see \cite{GLR83} and many references therein).  
 
If follows from \eqref{HSA} that the eigenvalues of $H$-selfadjoint matrices are  symmetric with respect to the real axis. In addition, the conjugate pairs of eigenvalues have the  Jordan blocks of the same sizes.

{\bf Affiliation relation.  } For any change of basis    $$x\mapsto u =T^{-1}x, \quad     y\mapsto v =T^{-1}y,$$
we have $$[x,y]_H=y^*Hx=(Tv)^*H(Tu)=v^*\overset{G}{\overbrace{(T^*HT)}}u=[u,v]_G.$$
That is,
a new congruent matrix $G = T^*HT$  generates  the same inner product  in the new basis.


Two pairs $(A,H)$ and $(B,G)$ are called {\bf affiliated} if $$T^{-1}AT=B\text{ and }T^{*}HT=G.$$
and this equivalence relation is called the {\bf affiliation relation}.
\begin{equation}\label{AFR}
    (A,H)\mapsto(B,G).
\end{equation}
The affiliation relation preserves selfadjointness in the indefinite inner product, that is if $A$ is $H$-selfadjoint, then $B$ is $G$-selfadjoint. 
	
	For an $H_0$-selfadjoint matrix there exists an invertible matrix $T_0$  (see e.g.~\cite{BOP,GLR83,M63} and the references therein), such that $(A_0, H_0) \overset{T_0}{\mapsto} (J,P)$, i.e. 
\begin{equation*}\label{affiliation}
T_0^{-1}A_0T_0=J \qquad \text{and} \qquad T_0^{*}H_0T_0=P,
\end{equation*}
where $J$ is the Jordan Form of $A_0$ and $P$ has a particularly sparse simple form called {\it the sip matrix} in \cite[Section 5.5]{GLR83}, which is uniquely defined up-to perturbation of the Jordan blocks in $J$. This is a very special affiliation relation. It first appeared in the works by Weierstrass \cite{W68,W95} (see also \cite[Chapter 5]{GLR83} and \cite[Chapter 3]{M63}). The columns of $T_0$ form a Jordan basis of $A_0$, and this basis was called \lq\lq canonical\rq\rq 
(see \cite{GLR86}) and the pair $(J,P)$ is sometimes called the {\it Weierstrass form} of $A_0$. Since we introduce two more canonical bases, we suggest to call this basis {\it flipped-orthogonal} (or just FO) basis  of $(A_0, H_0)$ in order to distinguish  them.

	
\subsection{FO, CS, and FOCS Bases}

Here is an example of an $H$-selfadjoint matrix.
\begin{equation}\label{A}
    A=\begin{bmatrix}
0&0&0&-16\\
1&0&0&0\\
0&1&0&-8\\
0&0&1&0\end{bmatrix}=TJT^{-1},\quad    H=\frac{1}{128}\begin{bmatrix}
0&1&0&-12\\
1&0&-12&0\\
0&-12&0&80\\
-12&0&80&0\end{bmatrix},
\end{equation}
where  $A=H^{-1}A^*H$ and
\begin{equation}\label{JT}
    J=\left[\begin{array}{cc|cc}
-2i&1&0&0\\
0&-2i&0&0\\\hline
0&0&2i&1\\
0&0&0&2i\end{array}\right],\quad     P=\left[\begin{array}{cc|cc}
0&0&0&1\\
0&0&1&0\\\hline
0&1&0&0\\
1&0&0&0\end{array}\right],
\end{equation}
{and}
\begin{equation}\label{P}
T=\begin{bmatrix}
8i&4&-8i&12\\
-4&4i&-4&-8i\\
2i&-1&-2i&1\\
-1&0&-1&-i\end{bmatrix}.
\end{equation}

  Here, $(J,P)$ is the FO form of $(A,H)$ and  the columns of $T=[t_i]$ satisfy 
$$[t_i,t_j]_H=[e_i,e_j]_P=\delta_{i,4-j},$$
where $\delta_{i,j}$ is the Kronecker symbol. Therefore, the Jordan basis $\{t_1,\ldots,t_n\}$ is called \textbf{flipped orthogonal} in agreement with its usage in \cite{BOP}.


Note that FO bases exist in the general complex case. However, the matrices $(A,H)$ in our example are real, which suggests the following question.
 
 {\it Are there any canonical bases specific for the  case of real matrices?} Yes, for example a conjugate symmetric (CS) basis. Let us return to the same matrices $A$ and $H$ are as in \eqref{A}.
and consider another Jordan basis for $A$, captured by the columns of the following matrix.
\begin{equation}\label{L}
L=\begin{bmatrix}
8i&4&-8i&4\\
-4&4i&-4&-4i\\
2i&-1&-2i&-1\\
-1&0&-1&0\end{bmatrix}.
\end{equation}
The columns $L=[l_1\ l_2\ l_3\ l_4]$ are conjugate of each other:
$$l_1=\overline{l}_3\text{ and }l_2=\overline{l}_4.$$
Therefore, we say that  $L$ captures a {\bf conjugate symmetric basis} of $A$ from \eqref{A} and $(A,H)\overset{L}{\mapsto}(J,G)$, where
$$
G=L^*HL=\begin{bmatrix}
0&0&0&1\\
0&0&1&-i\\
0&1&0&0\\
1&i&0&0\end{bmatrix}
$$is not a sip matrix as was $P$ in \eqref{P}. This means $L$ is CS but not FO.
Also, if we look back at the FO basis in $T$ from \eqref{JT}, it is  not CS.

Now that we have seen the example of  the FO and CS canonical bases we started to wonder:
{\it Could we find a Jordan basis for real $H$-selfadjoint matrices that is   flipped orthogonal and $\gamma$-conjugate symmetric at the same time?} 

Here is a matrix whose columns also capture a Jordan basis of the same $A$ as before.
$$M=\frac{1}{2}\begin{bmatrix}
16i&16&-16i&16\\
-8&12i&-8&-12i\\
4i&0&-4i&0\\
-2&i&-2&-i\end{bmatrix},$$
such that  $(A,H)\overset{M}{\mapsto}(J,P)$ and $M$ is CS. So $M$ captures a \textbf{flipped orthogonal conjugate symmetric basis}.

It is only an example. 

{\it Can one guarantee the existence of such bases?} The positive answer to this question can be found in \cite{DMO}.

\subsection{Lipschitz Stability of FO Bases}
Stability of Jordan bases were studied previously, for example, in the paper by Bella,   Olshevsky and Prasad \cite{BOP}. They proved the Lipschitz 
 stability of the so-called flipped orthogonal (FO) Jordan bases of {\em complex} $H$-selfadjoint matrices under small perturbations. 
 


The following fact was presented in~\cite[Proposition 1.5]{BOP}.


\begin{proposition} {\bf{(Lipschitz stability of FO-bases)} }\label{thm:stabilityFO}Let $ A_0 \in \mathbb{C}^{ n\times n}$ be a fixed $H_0$-selfadjoint matrix and let the columns of this matrix capture the FO-basis, that is, 
	\begin{displaymath}
	(A_0,H_0)\overset{T_0}{\mapsto}(J,P).
	\end{displaymath}
	There exist constants $K,\delta > 0$ (depending on  $A_{0}$ and  $H_0$ only) such that  the following assertion holds for any $H$-selfadjoint matrix $A$ with $A$ similar to $A_0$ and
	\begin{displaymath}
	\| A-A_0\| + \| H-H_0\| < \delta,
	\end{displaymath}
	there exists a matrix $T$ whose columns capture the FO-basis of $A$, i.e.  
	\begin{displaymath}
	(A,H)\overset{T}{\mapsto}(J,P)
	\end{displaymath}and such that 
	\begin{equation}\label{perturbation2}
	\| T - T_{0} \| \leq K(\| A-A_0 \| + \| H-H_0 \|).
	\end{equation}
\end{proposition}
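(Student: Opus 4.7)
The plan is to recast the perturbation problem as a nonlinear equation for a small correction $X$ defined by $T = T_0(I+X)$, and solve it by a quantitative implicit-function argument. The Jordan condition $T^{-1}AT = J$, combined with $A_0 T_0 = T_0 J$, reduces to the Sylvester-type equation
\[
JX - XJ = -\alpha - \alpha X, \qquad \alpha := T_0^{-1}(A-A_0)T_0,
\]
and the flipped-orthogonality condition $T^{*}HT = P$, combined with $T_0^{*}H_0 T_0 = P$, reduces to the Lyapunov-type equation
\[
PX + X^{*}P = -\eta - \eta X - X^{*}\eta - X^{*}PX - X^{*}\eta X, \qquad \eta := T_0^{*}(H-H_0)T_0.
\]
Treating the combined left-hand sides as a single linear operator $\mathcal{L}(X) := (JX-XJ,\, PX+X^{*}P)$ and the right-hand sides as a remainder $\mathcal{N}(X;\alpha,\eta)$ which is $O(\|\alpha\|+\|\eta\|) + O(\|X\|^{2} + \|X\|(\|\alpha\|+\|\eta\|))$, the problem becomes the fixed-point equation $\mathcal{L}(X) = \mathcal{N}(X;\alpha,\eta)$.

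Next I would analyze $\mathcal{L}$. Its kernel equals $\{X \in \mathcal{Z}(J) : PX + X^{*}P = 0\}$, where $\mathcal{Z}(J)$ is the centralizer of $J$; this kernel is precisely the tangent space to the group $G := \{Q \in \mathcal{Z}(J) \text{ invertible} : Q^{*}PQ = P\}$ acting on FO bases by right multiplication, reflecting that FO bases are unique only modulo $G$. Fix any closed complement $\mathcal{C}$ of $\ker \mathcal{L}$ in the space of $n\times n$ matrices. Block-decomposing according to the Jordan structure of $J$ and the sip form of $P$, the restricted operator $\mathcal{L}|_{\mathcal{C}}$ becomes a direct sum of finite-dimensional linear maps on pairs of Jordan blocks sharing an eigenvalue; an explicit triangular Toeplitz computation then shows that $\mathcal{L}|_{\mathcal{C}}$ admits a bounded inverse onto its range, with a constant $C_2$ depending only on $(J,P)$. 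The structural ingredient making this work is that the leading anti-diagonal entries of $PX + X^{*}P$, block by block, recover the transversal free parameters of $X$ in the centralizer.

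Granted this bounded inverse, I would apply the Banach contraction principle to the map $\Phi(X) := \mathcal{L}|_{\mathcal{C}}^{-1}\bigl(\mathcal{N}(X;\alpha,\eta)\bigr)$ on the ball $\{X \in \mathcal{C} : \|X\| \leq 2 C_2 (\|\alpha\|+\|\eta\|)\}$. For sufficiently small $\delta$ the map is a contraction (the quadratic terms in $\mathcal{N}$ are dominated by the ball radius), and its fixed point $X$ satisfies $\|X\| \leq K' (\|A-A_0\|+\|H-H_0\|)$. Setting $T := T_0(I+X)$ produces the desired FO basis for $(A,H)$ with the Lipschitz bound \eqref{perturbation2}, where $K = \|T_0\| \cdot K'$. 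Compatibility — that $\mathcal{N}(X;\alpha,\eta)$ lies in the range of $\mathcal{L}|_{\mathcal{C}}$ — is automatic because $(A,H)$ is assumed to be a genuine $H$-selfadjoint pair with $A$ similar to $A_0$, so at least one FO basis actually exists, guaranteeing that the obstruction vanishes.

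The hard part is the analysis of $\mathcal{L}$: identifying $\ker \mathcal{L}$ with the tangent space of the ambiguity group $G$, and proving that on the complement $\mathcal{C}$ the operator $\mathcal{L}$ has a bounded inverse with norm depending only on the canonical data $(J,P)$. This is a detailed block-structural calculation that exploits the precise interplay between the Jordan-block form of $J$ and the sip-block form of $P$; the argument would break down for non-canonical choices of the second component, which is why the FO (Weierstrass) normalization is specifically the right one to make the implicit function argument quantitative.
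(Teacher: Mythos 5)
First, a point of context: the paper does not prove this proposition at all --- it is imported verbatim as \cite[Proposition 1.5]{BOP}, so there is no in-paper proof to compare against; your attempt has to stand on its own. Your setup is correct: with $T=T_0(I+X)$ the two conditions do reduce exactly to the Sylvester equation $JX-XJ=-\alpha-\alpha X$ and the Lyapunov-type equation $PX+X^*P=-\eta-\eta X-X^*\eta-X^*PX-X^*\eta X$, and the identification of $\ker\mathcal{L}$ with the Lie algebra of the stabilizer group $G$ is right. A quantitative implicit-function/Newton argument of this kind is a legitimate alternative to the constructive, eigenvalue-by-eigenvalue approach used in \cite{BOP}.

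The genuine gap is the compatibility step, which you dismiss as ``automatic.'' The operator $\mathcal{L}(X)=(JX-XJ,\,PX+X^*P)$ maps a space of real dimension $2n^2$ into one of real dimension $3n^2$ (the second component is Hermitian-valued), so it has a large cokernel, and the iteration $\Phi(X)=\mathcal{L}|_{\mathcal{C}}^{-1}(\mathcal{N}(X;\alpha,\eta))$ is simply undefined unless $\mathcal{N}(X;\alpha,\eta)$ lies in $\operatorname{range}\mathcal{L}$ for \emph{every} iterate $X$, not just at the (unknown) exact solution. The existence of some FO basis $T_*$ for $(A,H)$ only tells you that $\mathcal{N}(X_*)\in\operatorname{range}\mathcal{L}$ at the single point $X_*=T_0^{-1}T_*-I$; it says nothing about other $X$ in the ball. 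The standard repair is a Lyapunov--Schmidt reduction: compose with the projection $\Pi$ onto $\operatorname{range}\mathcal{L}$, run the contraction for $\mathcal{L}(X)=\Pi\mathcal{N}(X)$, and then separately kill the bifurcation equation $(I-\Pi)\mathcal{N}(X)=0$ --- and that last step is precisely where the hard structural work lives (it is where the stability of the sign characteristic, i.e.\ that the \emph{same} $P$ with the same $\epsilon_k$'s serves the perturbed pair, must be proved rather than assumed; your appeal to the mere existence of an FO basis for $(A,H)$ quietly presupposes this). Relatedly, your ``explicit triangular Toeplitz computation'' is aimed at the wrong target: in finite dimensions any injective linear map automatically has a bounded inverse on its range, so boundedness of $\mathcal{L}|_{\mathcal{C}}^{-1}$ needs no computation; what does need an explicit block computation is the characterization of $\operatorname{range}\mathcal{L}$, which is exactly the piece missing from the proposal.
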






\subsection{Main results and the structure of the paper}
\subsubsection{Lipschitz Stability of FOCS Bases}

We are going to build on what we mentioned before and show that if the pair $(A,H)$ is real, and if matrix $T_0$ captures the FO but FOCS basis of $(A,H)$, there is a matrix $T$ which can  be chosen to capture the $\gamma$-FOCS basis of $(A,H)$ and still be Lipschitz stable under small perturbation preserving the Jordan structure. That is, we prove the bound \eqref{perturbation2}, in which both matrices $T_0$ and $T$ enjoy the additional CS-property.

\subsubsection{Lipshitz stability of RC bases}
We refer the reader to \cite{GLR83} for an introduction to real canonical bases. 

Let us come back to our example from \eqref{A}. Here is the real Jordan form $J_R$ of $A$ and its real canonical basis such that $R^*HR=P$.
$$
J_R=R^{-1}AR=\left[\begin{array}{cc|cc}
0&-2&1&0\\
2&0&0&1\\\hline
0&0&0&-2\\
0&0&2&0
\end{array}\right]\text{ and }
R=\begin{bmatrix}
-8&8&8&8\\
-4&-4&-6&6\\
-2&2&0&0\\
-1&-1&-0.5&0.5
\end{bmatrix}.
$$

In this paper, via FOCS bases we were able to obtain results on stability of {\it real Jordan canonical forms}. 

We prove that there exists an invertible matrix $S_0$ such that 
\begin{displaymath}
S_0^{-1}A_0S_0=J_R \qquad \text{and} \qquad S_0^{*}H_0S_0=P,
\end{displaymath}
where $J_R$ is a real Jordan form of $A_0$ (see Theorem~\ref{thm:real main} for details) and the columns of $S_0$ capture a {\it Real Canonical (RC) basis}. Moreover, if ${A} \in\mathbb{R}^ {n \times n}$  is a perturbation of $A_0$, preserving the Jordan structure, that is $H$-selfadjoint for some self-adjoint real matrix $H$, then there exists an invertible real matrix $S$ such that 
\begin{displaymath}
S^{-1}AS=J_R \qquad \text{and} \qquad S^{*}HS=P,
\end{displaymath} where the  columns of $S$ capture RC basis of $A$ and this basis is Lipschitz stable, that is
\begin{equation*}\label{realperturbation2}
\| S-S_0 \| \leq K(\| A-A_0 \| + \| H -H_0 \|).
\end{equation*}

\subsubsection{Weak Affiliation}


Now let us clarify what the difference is between {\it affiliation} and {\it weak affiliation}.


\begin{definition}{\bf (Weak Affiliation Relation)}\label{waffil}
	Two pairs $(A,H)$ and $(B,G)$ are called {\bf weakly affiliated} if for some invertible matrix $T$ matrices $T^{-1}AT$ and $B$ have the same Jordan basis, and $T^{*}HT=G$.
	The  relation $(A, H) \overset{T}{\mapsto_w} (B,G)$ is called the {\bf weak affiliation relation}.
\end{definition}

In \cite[Theorem 1.8]{BOP} it was showed that if a small perturbation $A$ of $A_0$ has the same Jordan structure as $A_0$, $H$ is a
small perturbation of $H_0$, and as before $A_0$ is $H_0$-selfadjoint and $A$ is $H$-selfadjoint, then  an affiliation
matrix $S$ exists, and it can be chosen to be a small perturbation of the identity matrix, and a
Lipschitz-type bound \begin{equation*}\label{perturbedidentity}
\| I-S \| \leq K(\| A-A_0 \| + \| H -H_0 \|)
\end{equation*}
holds for some positive $K$ depending on $A_0$ only. More details on the weak affiliation case are presented in Section 6.

\subsubsection{The Structure of the Paper}

For the sake of brevity we used examples of FO and FOCS bases to present our main result.   In Section 2 we introduce these concepts rigorously.
        In \cite{DMO} we established the existence of FOCS bases. That construction does not seem to be useful to derive our main result, Lipschitz stability. Hence, in Section 3 we present a different proof for 
        the existence of FOCS bases. The new construction allows us to establish the Lipshitz stability in Section 4. 
       In Section 5 the Lipshitz stability of real canonical forms is established.
       Finally, we slightly generalize the results of Sections 4 and 5 to the weak affiliation case in Section 6.


\section{{Canonical Bases and Forms} }

Let us introduce all the characters involved in this story.
 We have already presented examples of several canonical bases in Section 1. In this section, we formally define FO and CS bases.

\subsection{Flipped-Orthogonal Basis}

This type of bases was presented in \cite{BOP}.
As before, let $J_A=T^{-1}AT$ denote the Jordan canonical form of $A$,
that is 
\begin{displaymath}J_A=
\left [ \begin{array}{cccc}
J(\lambda_1)& {} &{} & 0\\
{} & {}& \ddots  &{}  \\ 
0 & {} &{} & J(\lambda_m) \\
\end{array} \right]\\,
\end{displaymath}
where  each 
\begin{displaymath}J(\lambda_k)=
\left [ \begin{array}{cccc}
\lambda_k & 1& {} &0\\
{} & \ddots & \ddots&  {}\\
{} & {}&\ddots & 1\\
0 & {} &{}&\lambda_k\\
\end{array} \right]\\
\end{displaymath} is a single Jordan block. 
If we partition $T$ correspondingly
\begin{displaymath}
T= [T_1\ | \ldots \ |T_{m} ];
\end{displaymath}
then the columns of $T_k$ form a Jordan chain of $A$ corresponding to $\lambda_k\in\sigma(A)$, and the columns of $T$ form a Jordan basis of $A$.

\begin{definition}{\bf (Affiliation Relation)}\label{affil}
	Two pairs (A,H) and (B,G) are called {\bf affiliated} if $T^{-1}AT=B$ and $T^{*}HT=G$.
	The  relation $(A, H) \overset{T}{\mapsto} (B,G)$ is called the {\bf affiliation relation}.
\end{definition}

Even if  there exists an invertible matrix $T$ such that $(A,H)\overset{T}{\mapsto}(B,G)$, and the matrix $B=T^{-1}AT$ is the Jordan form of $A$, the matrix $G=T^{\ast}HT$ is not necessarily simple.

As before we  consider $H$-selfadjoint matrices $A$. 
From the definition of $H$-selfadjoint matirces we know that $A$ is similar to $A^*$, meaning that nonreal eigenvalues of $A$ come in conjugate pairs. Moreover, the sizes of Jordan blocks of $A$ corresponding to such $\lambda$  and to $\overline{\lambda}$ are identical. Let us denote by $\lambda_1,\ldots, \lambda_\alpha$ all real eigenvalues of $H$-selfadjoint matrix $A$, and by $\lambda_{\alpha+1}, \ldots, \lambda_\beta$ all eigenvalues of $A$ from the upper half plane. Then the Jordan canonical form of $A$ has the form 
\begin{displaymath}T^{-1}AT= J_A=
\left [ \begin{array}{ccccccccc}
J_1& {} &{} & {} &{} &{}& {} & {}  \\
{} & {}& \ddots  &{} & {} &{} &{}& {} \\ 
{} & {}& {}  &J_{\alpha} & {} &{} &{}& {} \\
{}& {} &{} & {} &\hat{J}_{\alpha+1}&{}& {}\\
{} &  {} & {}&{} & {} &\ddots &{}& {}  \\
{} & {}& {}  &{} & {} &{} & {} & \hat{J}_{\beta}\\ 
\end{array} \right]\\
\end{displaymath} where $J_k$  is a single Jordan block corresponding to real $\lambda_k$ as before (for $k=1,\ldots,\alpha$), the matrices 
\begin{displaymath}
\hat{J}_k=\left [ \begin{array}{cc}
J(\lambda_k) &0\\
0 &J(\overline{\lambda}_k)\\
\end{array} \right]
\end{displaymath} for $k=\alpha+1,\ldots,\beta$ are direct sums of two Jordan blocks of the same size corresponding to $\lambda_k$ and $\overline{\lambda}_k$, respectively.

The following fact (see~\cite{GLR83,M63}) shows that there is  such a matrix $T$ that makes $T^*HT$  sparse.

\begin{proposition}\label{thm:main1} Let ${A} \in\mathbb{C}^ {n \times n}$  be a fixed $H$-selfadjoint matrix for some self-adjoint matrix $H$ and $J$ be its Jordan form. Then there exists an invertible matrix $T$ such that 
	\begin{equation}\label{eq:pairs1}
	(A,H)\overset{T}{\mapsto}(J,P)
	\end{equation}
	where
	\begin{equation}\label{J}J= J(\lambda_1)\oplus \cdots J(\lambda_\alpha)\oplus \hat{J}(\lambda_{\alpha+1})\oplus \cdots \oplus\hat{ J}(\lambda_\beta)
	\end{equation}
	and $J(\lambda_i)$ is a  Jordan  block for real eigenvalues $\lambda_1,\ldots,\lambda_{\alpha}$ and 
	\begin{equation}\label{Jk}
	\hat{J}(\lambda_k)=\left [ \begin{array}{cc}
	J(\lambda_k) &0\\
	0 &J(\overline{\lambda}_k)\\
	\end{array} \right]
	\end{equation} is a direct sum of two Jordan blocks of the same size corresponding to $\lambda_k$ and~$\overline{\lambda}_k$,
	\begin{equation}\label{eq:P}
	P=P_1 \oplus\cdots \oplus P_{\alpha} \oplus {P}_{\alpha+1} \oplus \cdots \oplus{P}_{\beta}
	\end{equation}
	where $P_k$ is a sip matrix $\epsilon_k \tilde{I}_k$
	(i.e. $\left[ \begin{smallmatrix}
	&&& \\
	0 & \ldots & 0 &\epsilon_k\\
	\vdots & \invddots & \epsilon_k & 0\\
	0 & \invddots & \invddots & \vdots\\[0.5em]
	\epsilon_{k} & 0 & \cdots & 0\\
	&&& 
	\end{smallmatrix} \right]$) of the same size as $J({\lambda_k})$ and $\epsilon_{k}=\pm{1}$  for $ k=1,\ldots,\alpha$ and a sip matrix $\tilde{I}_k$ (i.e. $\left[ \begin{smallmatrix}
	&&& \\
	0 & \ldots & 0 &1\\
	\vdots & \invddots & 1 & 0\\
	0 & \invddots & \invddots & \vdots\\[0.5em]
	1 & 0 & \cdots & 0\\
	&&& 
	\end{smallmatrix} \right]$) of the same size as  $\hat{J}({\lambda_k})$ for $k=\alpha+1,\ldots,\beta$. 
\end{proposition}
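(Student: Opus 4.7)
The plan is to start from any Jordan basis of $A$ and then adjust it by a transformation from the centralizer of $J$ in order to normalize the associated Gram matrix into the prescribed sip form $P$. Fix any invertible $\tilde T$ with $\tilde T^{-1} A \tilde T = J$ and set $G := \tilde T^{*} H \tilde T$. The $H$-selfadjointness of $A$ translates into the intertwining identity $J^{*} G = G J$, while Hermiticity and invertibility of $H$ give $G^{*} = G$ and $\det G \neq 0$. Any $S$ with $SJ = JS$ preserves the Jordan form, so $\tilde T S$ is again a Jordan basis; hence it suffices to exhibit $S$ in the centralizer of $J$ with $S^{*} G S = P$, and then $T := \tilde T S$ solves the problem.

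I would next analyze the block structure of $G$ dictated by $J^{*} G = G J$, using the decomposition in \eqref{J}. A standard polynomial-intertwiner argument shows that if $u$ lies in the generalized eigenspace of $A$ for $\lambda_i$ and $v$ in the one for $\lambda_j$, then $[u,v]_H = 0$ as soon as $\bar{\lambda}_i \neq \lambda_j$; indeed one checks by induction that $[(A-\lambda I)^k u, v]_H = [u, (A-\bar{\lambda} I)^k v]_H$, and the right-hand side vanishes because $(A-\bar{\lambda}I)$ acts invertibly on the eigenspace of any $\mu \neq \bar{\lambda}$. Consequently the only possibly nonzero blocks of $G$ are the diagonal ones indexed by real eigenvalues and the off-diagonal ones coupling $J(\lambda_k)$ with $J(\bar{\lambda}_k)$. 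When several Jordan blocks share the same eigenvalue, these blocks need not yet be pairwise $G$-orthogonal; I would run a Witt-type Gram--Schmidt within each generalized eigenspace, performed through an $S$ commuting with $J$, to zero out the cross-block entries. This is the step I expect to be the main obstacle, since the centralizer of $J$ is substantially larger in the presence of repeated eigenvalues and one has to choose $S$ that simultaneously respects the nilpotent intertwining and the Hermitian constraints.

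Once $G$ is reduced to a genuine block-diagonal form respecting \eqref{eq:P}, normalizing each piece becomes a concrete Toeplitz computation. For a single real block of size $\ell$, the intertwiner equation $J(\lambda_k)^{*} G_k = G_k J(\lambda_k)$ combined with $G_k^{*} = G_k$ forces $G_k$ to be an anti-triangular Hankel matrix whose anti-diagonal entry is a nonzero real scalar $g_k$. I would then pick $S_k$ in the commutative algebra of upper-triangular Toeplitz matrices commuting with $J(\lambda_k)$ and solve $S_k^{*} G_k S_k = \epsilon_k \tilde I_k$ with $\epsilon_k = \mathrm{sgn}(g_k)$; the resulting relations form a triangular system whose leading equation fixes the top Toeplitz coefficient (by a real square root) and whose subsequent equations become linear in the remaining coefficients. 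For a conjugate pair $\{\lambda_k, \bar{\lambda}_k\}$, the same idea applies to the off-diagonal coupling block: an invertible Toeplitz $S_k$ on the $\lambda_k$ side, paired with the matching transformation on the $\bar{\lambda}_k$ side forced by $SJ = JS$, can be used to bring the coupling to $\tilde I_k$. Collecting the $S_k$'s into a block-diagonal $S$ commuting with $J$ then yields the required $T = \tilde T S$.
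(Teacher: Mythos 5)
The paper does not actually prove this proposition: it is quoted as a classical fact with a pointer to \cite{GLR83,M63}, so there is no in-house argument to compare yours against. Your outline follows the standard textbook strategy (pass to $G=\tilde T^{*}H\tilde T$, exploit the intertwining relation $J^{*}G=GJ$ together with $G^{*}=G$ and $\det G\neq 0$, decouple root subspaces, then normalize blockwise inside the centralizer of $J$), and the two steps you actually carry out are sound. The orthogonality $[u,v]_H=0$ between root subspaces of $\lambda_i$ and $\lambda_j$ with $\overline{\lambda}_i\neq\lambda_j$ is exactly the polynomial-intertwiner argument, and the normalization of a single anti-triangular Hankel block by an upper-triangular Toeplitz $S_k$ is a correct triangular-system computation --- it is essentially what the paper itself does in Step 4 of the proof of Theorem~\ref{thm:focs1}, where $F_k^{T}G_k^{(2)}F_k=\widetilde{I}_k$ is solved by the same device.

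However, the step you flag as ``the main obstacle'' is precisely the mathematical content of the proposition, and you do not supply it. When the root subspace of a real $\lambda_k$ (or of a conjugate pair) contains several Jordan blocks, a ``Witt-type Gram--Schmidt performed through an $S$ commuting with $J$'' is not a routine procedure: the restriction of the form to the span of a single Jordan chain can be degenerate, so one cannot orthogonalize chain by chain. The classical argument instead selects, among generalized eigenvectors of maximal height $p$, a vector $x$ with $[x,(A-\lambda_k I)^{p-1}x]_H\neq 0$ --- whose existence must itself be deduced from the nondegeneracy of $G$ on the root subspace combined with the Hankel structure --- builds the chain generated by $x$, shows that the $H$-orthogonal complement of its span is $A$-invariant with nondegenerate restricted form, and inducts on the dimension. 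That is also the point at which the signs $\epsilon_k$ (the sign characteristic) are produced; your recipe $\epsilon_k=\mathrm{sgn}(g_k)$ only makes sense after the decoupling has been achieved. As written, the proposal is a correct reduction of the proposition to its hardest step rather than a proof of it.
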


This shows us the existence of a canonical basis for which $P$ has a very particular form.

Think back to the example of $A$ and $H$ from \eqref{A}. An illustration to Proposition \ref{thm:main1} would be matrix $T$ in \eqref{T_1}. The following is a formal definition of the basis captured by columns of $T$.

\begin{definition}{\bf(Flipped-orthogonal Basis)}  Let us partition $T$ in $\eqref{eq:pairs1}$ in the following way
	\begin{displaymath}
	T= [T_1  \ldots T_{\alpha}\ | T_{\alpha+1} \ldots T_{\beta} ].
	\end{displaymath}
	Obviously, for  $k=1,\ldots, \alpha$ the columns of 
		\begin{displaymath}
	T_k=[g_{0,k}, \ldots, g_{{p_k}-1,k}]
	\end{displaymath}
	form the Jordan chain of $A$ corresponding to $\lambda_k$. Similarly, the columns of
		\begin{displaymath}
	T_k=[\underbrace{g_{0,k}, \ldots, g_{{p_k}-1,k}}_{\lambda_k}\ | \underbrace{h_{0,k},\ldots,h_{{p_k}-1,k}}_{\overline{\lambda}_k} ]
	\end{displaymath}
	for $k=\alpha+1,\ldots, \beta$ form two Jordan chains, corresponding to $\lambda_k$ and ${\overline{\lambda}_k}$ respectively. Then the structure of the matrix $P$ in \eqref{eq:P} implies the following orthogonality relation,
	\begin{displaymath}
	g_{ik}^*Hg_{jm} =0, \;\; \;\;
	h_{ik}^*Hg_{jm} =0, \;\; \;\;
	h_{ik}^*Hh_{jm} =0, \;\; \text{ for }\;\; k \neq m.
	\end{displaymath}
	Further, we have that
	\begin{displaymath}
	g_{i,k}^*Hg_{j,k} = \left \{  \begin{array}{ll}
	\epsilon_k, \quad j=p_{k} -1-i\\ 
	0 \quad otherwise
	\end{array} \right.  \qquad for  \quad k=1,\ldots,\alpha
	\end{displaymath}
	\begin{displaymath}
	\left \{  \begin{array}{ll}
	g_{ik}^*Hg_{jk} =0,\\
	h_{ik}^*Hh_{jk} =0,\\ 
	h_{i,k}^*Hg_{j,k} = \left \{  \begin{array}{ll}
	1, \quad j=p_{k} -1-i\\ 
	0 \quad otherwise
	\end{array} \right. 
	\end{array} \right.  \qquad for  \quad k=\alpha+1,\ldots,\beta.
	\end{displaymath}
	Jordan bases, having the above property, are called flipped-orthogonal (FO). 
\end{definition}


\subsection{$\gamma$-Conjugate Symmetric Bases}
If $A \in \mathbb{R}^{ n \times n} $  and $\{q_k \}_{k=0}^{m-1}$ is its Jordan chain corresponding to a nonreal eigenvalue $\lambda$ then  $\{ \gamma \overline{q}_k\}_{k=0}^{m-1}$  is also a Jordan chain of $A$ but corresponding to $\overline{\lambda}$ for any non-zero $\gamma \in \mathbb{C}$. This observation leads to the following definition.

\begin{definition}{\bf ($\gamma$-Conjugate Symmetric Basis)}
	Suppose $A\in \mathbb{R}^{ n \times n} $ and that there exists an invertible matrix $N$ such that $A=NJN^{-1}$, where 
	\begin{equation}\label{hatJ}
	J=J(\lambda_{1})\oplus\cdots \oplus J(\lambda_{\alpha})\oplus \hat{J}(\lambda_{\alpha+1})\oplus \cdots \oplus \hat{J}(\lambda_{\beta})
	\end{equation}is the Jordan canonical form with $\hat{J}(\lambda_k)$ defined in  \eqref{J} and \eqref{Jk}, the matrix
	\begin{equation}{\label{N}}
	N=[N_1 \lvert \ldots \lvert N_{\alpha} \lvert {N}_{\alpha+1} \lvert \ldots \lvert {N}_{\beta}]
	\end{equation}
	can be chosen 
	\begin{equation}\label{eq:t1}
	N_{k}=[Q_{k} \lvert \gamma\overline{Q}_{k}] 
	\end{equation} 
	for some $\gamma\neq0$ and $k=\alpha+1,\ldots,\beta$. The columns of $N$ capture the Jordan basis of $A$, and because of \eqref{eq:t1} we call it the $\gamma$-Conjugate Symmetric Basis.
	
\end{definition}

Now that we have introduced two cannonical bases, FO and $\gamma$-CS, we might ask:
{\it Is there a basis for real $H$-selfadjoint matrices that is both flipped orthogonal and $\gamma$-conjugate symmetric?} The answer to this question is in our next section.


\section{{Existence of a $\gamma$-FOCS Basis} }

The FO basis was defined for pairs of matrices $(A,H),$ where $A$ and  $H$ are not necessarily real. The $\gamma$-CS basis is just defined  for the case of   $A$ being a real matrix.

Proposition {\ref{thm:main1}} implies the existence of the FO basis for $(A,H)$,  the latter, generally, does not have the CS property. Theorem  {\ref{thm:focs1}}  shows that if $(A,H)$ is real, then there exists a $\gamma$-FOCS basis, that is, the one, which is simultaneously  FO and  $\gamma$-CS. 

\begin{theorem}{\bf (Existence of  a $\gamma$-FOCS basis)} \label{thm:focs1} Let all the assumptions of Proposition \ref{thm:main1}  hold true. Assume, in addition, that both matrices $A$ and $H$ are real. Then $N$ in $(A,H)\overset{N}{\mapsto}(J,P)$ can be chosen such that if we partition it in the agreement with \eqref{N} 
	then each $N_{k}$ has the form 
	\begin{equation*}
	N_{k}=[L_{k} \lvert \gamma\overline{L}_{k}] 
	\end{equation*} 
	for some matrices $L_k$ for $k=\alpha+1,\ldots,\beta$.
\end{theorem}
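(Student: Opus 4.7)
The plan is to start from an FO decomposition $(A,H)\overset{T}{\mapsto}(J,P)$ provided by Proposition~\ref{thm:main1} and modify only the blocks corresponding to conjugate pairs of complex eigenvalues so that the resulting basis is simultaneously FO and $\gamma$-CS. For real eigenvalues the standard inductive construction of an FO basis stays within $\mathbb{R}$ when $A$, $H$, and $\lambda_k$ are real, so the corresponding blocks $N_k=T_k$ are already CS (they coincide with their own complex conjugates) without any change.

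Fix $k\in\{\alpha+1,\ldots,\beta\}$ and write the FO block as $T_k=[G_k\,|\,H_k]$, where $AG_k=G_kJ(\lambda_k)$ and $AH_k=H_kJ(\bar\lambda_k)$. Since $A$ is real, the columns of $\bar G_k$ also form a Jordan chain for $\bar\lambda_k$, which suggests the ansatz
\[
N_k:=[L_k\,|\,\gamma\bar L_k],\qquad L_k:=G_kM,
\]
where $M$ is an invertible $p_k\times p_k$ upper triangular Toeplitz matrix (precisely the matrices commuting with $J(\lambda_k)$, which is what guarantees $L_k$ stays a Jordan chain for $\lambda_k$). Expanding $N_k^{*}HN_k$ block by block, the two diagonal blocks vanish automatically from the general $H$-orthogonality of generalized eigenspaces indexed by $\lambda,\mu$ with $\lambda\neq\bar\mu$ (here $\lambda_k\neq\bar\lambda_k$ because $\lambda_k$ is nonreal), and the two off-diagonal blocks are complex conjugates of each other. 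So $N_k^{*}HN_k=\tilde I_{2p_k}$ collapses to the single matrix equation
\[
L_k^{T}HL_k=\bar\gamma^{-1}\tilde I_{p_k}.
\]

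The crux is to show this equation is solvable in $M$. Set $W_k:=G_k^{T}HG_k$. From $A=H^{-1}A^{*}H$ combined with $A,H$ real and $H=H^{T}$ one obtains $J(\lambda_k)^{T}W_k=W_kJ(\lambda_k)$, which forces $W_k$ to be a Hankel matrix. Next, since $H_k$ and $\bar G_k$ are two Jordan chains spanning the same generalized eigenspace $\ker(A-\bar\lambda_kI)^{p_k}$, there is a unique invertible upper triangular Toeplitz $R$ with $H_k=\bar G_kR$; plugging this into the FO identity $H_k^{*}HG_k=\tilde I_{p_k}$ yields $R^{*}W_k=\tilde I_{p_k}$, hence
\[
W_k=(R^{*})^{-1}\tilde I_{p_k}=\tilde I_{p_k}\,S,
\]
where $S:=\tilde I_{p_k}(R^{*})^{-1}\tilde I_{p_k}$ is invertible upper triangular Toeplitz (conjugation by $\tilde I_{p_k}$ interchanges the upper and lower triangular Toeplitz classes). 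Using the identity $M^{T}\tilde I_{p_k}=\tilde I_{p_k}M$, valid for every upper triangular Toeplitz $M$, together with the commutativity of the upper triangular Toeplitz algebra, the key equation becomes the scalar-type identity
\[
M^{2}=\bar\gamma^{-1}S^{-1}.
\]
The algebra of $p_k\times p_k$ upper triangular Toeplitz matrices is isomorphic to $\mathbb{C}[x]/(x^{p_k})$; since $\bar\gamma^{-1}S^{-1}$ has nonzero constant term and $\mathbb{C}$ is algebraically closed, a square root $M$ exists, built coefficient by coefficient starting from any square root of the constant term. Taking $L_k:=G_kM$ completes the block, and assembling $N=[N_1\,|\,\cdots\,|\,N_\beta]$ preserves the cross-block $H$-orthogonality because distinct $k$'s correspond to disjoint pairs $\{\lambda_k,\bar\lambda_k\}$.

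The most delicate step will be the bookkeeping with $\tilde I_{p_k}$, transposes and conjugate transposes needed to recognize the factorization $W_k=\tilde I_{p_k}S$ and to collapse the matrix equation $L_k^{T}HL_k=\bar\gamma^{-1}\tilde I_{p_k}$ to the scalar square-root problem $M^{2}=\bar\gamma^{-1}S^{-1}$ in the commutative Toeplitz algebra. Once that reduction is in place, the existence of $M$ is essentially a remark about $\mathbb{C}[x]/(x^{p_k})$, and the construction of $N$ then directly exhibits the desired $\gamma$-FOCS basis.
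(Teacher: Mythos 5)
Your construction is correct and rests on the same two pillars as the paper's proof: the observation that $W_k=G_k^{T}HG_k$ intertwines $J(\lambda_k)^{T}$ and $J(\lambda_k)$ and is therefore a lower anti-triangular Hankel matrix $\tilde I_{p_k}S$ (this is exactly the content of the paper's Lemma~\ref{lemma1}), and the solvability of a quadratic equation in the commutative algebra of upper triangular Toeplitz matrices, i.e.\ in $\mathbb{C}[x]/(x^{p_k})$. The difference is organizational: the paper factors the correction as $N=Z_1Z_2Z_3Z_4$, first normalizing the phase and modulus of the leading anti-diagonal entry (Steps 2--3) so that the final square root is of a \emph{unit} lower triangular Toeplitz matrix, whereas you collapse all of this into the single equation $M^{2}=\bar\gamma^{-1}S^{-1}$. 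For existence your version is cleaner, but note that the paper's four-factor decomposition is not incidental --- it is precisely the scaffolding reused in the proof of Theorem~\ref{thm:main2}, where each factor $Z_2,Z_3,Z_4$ is separately shown to be $\Theta$-close to $I$; your one-shot $M$ would have to be re-expanded into comparable pieces to serve that purpose. One caveat, which you share with the paper rather than introduce: your identification $H_k=\bar G_kR$ with $R$ upper triangular Toeplitz, and the final assertion that cross-block orthogonality survives because "distinct $k$'s correspond to disjoint pairs $\{\lambda_k,\bar\lambda_k\}$," both silently assume each nonreal eigenvalue carries a single Jordan block; when $\lambda_k=\lambda_{k'}$ for $k\neq k'$ the pairing between $\bar L_{k'}$ and $L_k$ need not vanish and both arguments would need an extra block-matrix step there.
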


In other words, the columns of $N$ form not only FO, but also a $\gamma$-CS basis. Theorem~\ref{thm:focs1} was proven in \cite{DMO}. Below we need to provide it with a different proof, since the notations and details of this new proof will be used in derivation of our main result, the Lipschitz stability.

To establish that we first need to prove the following lemma.


\begin{lemma} \label{lemma1} Let 
	\begin{displaymath} A=
	\left[ \begin{array}{cccccc}
	J_m{(\lambda)}& 0\\
	0 & J_m{(\overline{\lambda})}
	\end{array}\right],
	\end{displaymath}where $J_m(\lambda)$ is an $m\times m$ Jordan block with an eigenvalue $\lambda \notin \mathbb{R}$. If $A$ is $\widetilde{G}$-selfadjoint, then $\widetilde{G}$ has the following structure
	\begin{equation} \label{lemma:G}\widetilde{G}=
	\left[ \begin{array}{cccccc}
	0& G^{*} \\
	G & 0
	\end{array}\right],\quad where \quad
	G= 
	\left[ \begin{array}{ccccc}
	0 & \ldots & 0 &g_0\\
	\vdots & \invddots & g_0 & g_1\\
	0 & \invddots & \invddots & \vdots\\
	g_0 & g_1 & \cdots & g_{m-1}\\
	\end{array}\right],
	\end{equation} with some coefficients $g_j$'s.
\end{lemma}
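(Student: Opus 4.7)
The plan is to extract the structure of $\widetilde{G}$ block-by-block from the $\widetilde{G}$-selfadjointness of $A$. Partitioning conformally with $A$, write $\widetilde{G}=\begin{bmatrix} G_{11} & G_{12} \\ G_{21} & G_{22}\end{bmatrix}$ with $m\times m$ blocks. Hermiticity of $\widetilde{G}$, which is built into the notion of an indefinite inner product, yields $G_{11}^*=G_{11}$, $G_{22}^*=G_{22}$, and $G_{12}=G_{21}^*$. Rewriting the defining identity $A=\widetilde{G}^{-1}A^*\widetilde{G}$ as $\widetilde{G}A=A^*\widetilde{G}$ and reading off the four block entries produces the Sylvester-type equations
\[
G_{11}J_m(\lambda)=J_m(\lambda)^*G_{11},\qquad G_{22}J_m(\overline{\lambda})=J_m(\overline{\lambda})^*G_{22},
\]
\[
G_{21}J_m(\lambda)=J_m(\overline{\lambda})^*G_{21},\qquad G_{12}J_m(\overline{\lambda})=J_m(\lambda)^*G_{12}.
\]

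The diagonal equations have coefficient matrices with disjoint spectra, namely $\sigma(J_m(\lambda))=\{\lambda\}$ versus $\sigma(J_m(\lambda)^*)=\{\overline{\lambda}\}$, and $\lambda\neq\overline{\lambda}$ because $\lambda\notin\mathbb{R}$. The standard Sylvester uniqueness theorem therefore forces $G_{11}=0$ and $G_{22}=0$, accounting for the two zero blocks on the diagonal of $\widetilde{G}$.

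For the off-diagonal block I would exploit the identity $J_m(\overline{\lambda})^*=\tilde{I}_m J_m(\lambda)\tilde{I}_m$, which is a quick direct verification: conjugating by the sip matrix reverses row and column order and sends the upper-triangular Toeplitz block $J_m(\lambda)$ to its transpose with eigenvalue unchanged on the diagonal. Using $\tilde{I}_m^2=I$, the equation $G_{21}J_m(\lambda)=J_m(\overline{\lambda})^*G_{21}$ becomes $(\tilde{I}_m G_{21})J_m(\lambda)=J_m(\lambda)(\tilde{I}_m G_{21})$, so $\tilde{I}_m G_{21}$ lies in the centralizer of a single Jordan block $J_m(\lambda)$. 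That centralizer is the algebra of polynomials in the nilpotent shift, i.e.\ upper triangular Toeplitz matrices with free scalar entries $g_0,g_1,\ldots,g_{m-1}$. Multiplying on the left by $\tilde{I}_m$ flips the rows and turns this upper triangular Toeplitz matrix into exactly the Hankel-type pattern for $G:=G_{21}$ displayed in \eqref{lemma:G}.

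Finally, Hermiticity gives $G_{12}=G_{21}^*=G^*$, completing the block anti-diagonal form $\widetilde{G}=\begin{bmatrix} 0 & G^*\\ G & 0\end{bmatrix}$. No genuine obstacle arises; the only point requiring care is the bookkeeping of the sip-matrix conjugations and the orientation of the resulting Hankel pattern after the second multiplication by $\tilde{I}_m$, which I would double-check on the $m=2$ or $m=3$ case before presenting the general argument.
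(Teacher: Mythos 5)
Your proof is correct, but it travels a genuinely different road than the paper's. The paper works entirely at the level of the indefinite inner product: it takes the Jordan chain vectors $e_1,\dots,e_{2m}$, uses the chain relations $(A-\lambda I)e_{k+1}=e_k$ and $(A-\overline{\lambda}I)e_{k+1}=e_k$ to derive recurrences such as $(\overline{\lambda}-\lambda)[e_{k+1},e_1]=[e_k,e_1]$ and $[e_k,e_j]=[e_{k+1},e_{j-1}]$, and then kills the diagonal blocks and establishes the Hankel pattern of $Z$ by a double induction on the indices. You instead reformulate $\widetilde{G}$-selfadjointness as the matrix identity $\widetilde{G}A=A^*\widetilde{G}$, read off four block Sylvester equations, annihilate $G_{11}$ and $G_{22}$ by the Sylvester uniqueness theorem (disjoint spectra $\{\lambda\}$ and $\{\overline{\lambda}\}$, valid precisely because $\lambda\notin\mathbb{R}$), and then identify $G_{21}$ by observing that $\tilde{I}_m G_{21}$ lies in the centralizer of the single Jordan block $J_m(\lambda)$, hence is upper triangular Toeplitz, so that left multiplication by $\tilde{I}_m$ produces exactly the lower anti-triangular Hankel pattern of \eqref{lemma:G}. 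Both the identity $J_m(\overline{\lambda})^*=\tilde{I}_m J_m(\lambda)\tilde{I}_m$ and the centralizer description you invoke are standard and correct, and your block equations match what one gets from $\widetilde{G}A=A^*\widetilde{G}$. What your approach buys is brevity and conceptual clarity: the entire elementwise induction collapses into two citations of classical structure theorems. What the paper's approach buys is self-containedness (no appeal to the Sylvester theorem or to the centralizer description) and, more importantly for the authors, explicit formulas $[e_k,e_{2m}]=g_{k-1}$ and intermediate identities that are reused in the perturbation estimates of the later sections; your argument, while cleaner, does not expose those quantities.
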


\begin{proof}
	Recall that $\{ e_k \}$ form a Jordan chain of $A$ means 
	\begin{equation}\label{eq:chain1}
	(A-\lambda I)e_{1}=0, \qquad (A-\lambda I){e}_{k+1}={e}_{k} \qquad k=1, \ldots, m-1
	\end{equation}
	and 
	\begin{equation} \label{eq:chain2}
	(A- \overline{\lambda} I)e_{m+1}=0, \qquad (A-\overline{\lambda} I){e}_{k+1}= {e}_k \qquad k=m+1, \ldots, 2m-1.
	\end{equation}
	Let 
	\begin{displaymath}\widetilde{G}= 
	\left[ \begin{array}{ccc}
	X &Y\\
	Z&U\\
	\end{array}\right],
	\end{displaymath}
	where $X=\{[e_i,e_j]\}_{i,j=1}^m$,  $U=\{[{e}_i,e_j]\}_{i,j=m+1}^{2m}$,  $Z=\{[e_i,{e}_j]\}_{i=1,j=m+1}^{m,2m}$ and $Y=Z^*$ by conjugate symmetry of indefinite inner products.
	
	Note that  $X$ is the zero matrix.
	Using \eqref{eq:chain1}, we have
	\begin{displaymath}
	\lambda[e_1,e_1]=[\lambda e_1,e_1]=[Ae_1,e_1]=[e_1, Ae_1]=[e_1,\lambda e_1]
		\end{displaymath}	\begin{displaymath}
	=\overline{\lambda}[e_1,e_1]\Rightarrow [e_1,e_1]=0, 
	\end{displaymath} 
	since $\lambda\neq\overline{\lambda}$. 
	Moreover,  for any $k=1,\dots,m-1$
	\begin{equation*}
	(\overline{\lambda}-{\lambda})[e_{k+1},e_1]=[e_{k+1},(\lambda-\overline{\lambda})e_1]=[e_{k+1}, (A-\overline{\lambda} I)e_{1}]
	\end{equation*}
	\begin{equation}\label{1col}
	= [(A-\lambda I)e_{k+1},e_{1}]=[e_k,e_{1}] 
	\end{equation}
	and
	\begin{equation*}
	({\lambda}-\overline{\lambda})[e_{1},e_{k+1}]=[(\lambda-\overline{\lambda})e_{1},e_{k+1}]= [(A-\overline{\lambda} I)e_{1},e_{k+1}]
	\end{equation*}
	\begin{equation}\label{1row}
	=[e_{1}, (A-{\lambda} I)e_{k+1}]=[e_1,e_{k}].
	\end{equation}
	Keeping in mind that $ \lambda\neq\overline{\lambda}$ and $[e_1,e_1]=0$, we get $[e_1,e_{k}]=[e_k,e_{1}]=0$ by induction from \eqref{1col} and \eqref{1row} for $k=1,\dots,m$.
	
	Suppose $[e_{i-1},e_j]=0$ and
	$[e_i, e_{j-1}]=0$ for some $i,j=2,\dots,m$.
	Next, 
	\begin{displaymath}
	\lambda[e_{i},e_j]=[\lambda e_i,e_j]=[Ae_{i}-e_{i-1},e_j]=[Ae_{i},e_j]-{[e_{i-1},e_j]}=[e_i,Ae_j]=
	\end{displaymath}
	\begin{displaymath}
	=[e_i,\lambda e_j +e_{j-1}]=[e_i,\lambda e_{j}]-{[e_{i},e_{j-1}]} =\overline{\lambda}[e_i,e_j]\quad\text{ or }\quad [e_i,e_j]=0,
	\end{displaymath}
	since $ \lambda\neq\overline{\lambda}$. Thus, by induction on $i$ and $j$ we have $X=0$. 
	
	The fact that $U=0$ can be proved using similar argument.
	
Next, let us see what happens to
	\begin{equation}\label{lemma:Z}Z= 
	\left[ \begin{array}{cccc}
	\ [e_{1},{e}_{m+1}] & [e_{1},{e}_{m+2}]  & \ldots & [e_{1},e_{2m}] \\
	\ [ e_{2},e_{m+1}] & [e_{2},e_{m+2}] &\ldots & [e_{2},e_{2m}]\\
	\vdots & \vdots & \ddots & \vdots \\
	\ [e_{m},e_{m+1}] &[e_{m},e_{m+2}] & \ldots & [e_{m},e_{2m}] \\
	\end{array} \right].
	\end{equation}
	
Applying \eqref{eq:chain2}	 for  $k=1,2, \ldots, m-1$ we get
	\begin{equation}\label{lemma:upper}
	[e_k,e_{m+1}]= [(A-\lambda I)e_{k+1},e_{m+1}]=[e_{k+1}, (A-\overline{\lambda} I)e_{m+1}]=[e_{k+1},0]=0
	\end{equation}
	\begin{equation}\label{lemma:upper1}
	[e_m,e_{m+1}]=[e_{m}, (A-\overline{\lambda} I)e_{m+2}]= [(A-\lambda I)e_{m},e_{m+2}]=[e_{m-1},e_{m+2}].
	\end{equation}
This means that  matrix $Z$ has the Hankel structure as shown in \eqref{lemma:G}.  We have
	\begin{equation}\label{lemma:hankel}
	[e_k,e_{j}]= [(A-\lambda I)e_{k+1},e_{j}]=[e_{k+1}, (A-\overline{\lambda} I)e_{j}]=[e_{k+1},e_{j-1}]
	\end{equation}
	for  $ k=1,\ldots, m-1$ and $j=m+2,\ldots, 2m$.
	The matrix in \eqref{lemma:Z} is Hankel and becomes $G$ from \eqref{lemma:G}  by using \eqref{lemma:upper}--\eqref{lemma:hankel} and by letting $[e_k,e_{2m}]=g_{k-1}$. 
\end{proof}

We are at the point now to prove the main result of this section. We are going to construct the columns of matrix $N$ in Theorem~\ref{thm:focs1} in a way that it captures the~$\gamma$-FOCS  basis of $(A,H)$. This matrix $N=Z_1Z_2Z_3Z_4$ is constructed in four steps, as described in  Table {\ref{table:constructing N}} below.
\begin{table}[h]
	\centering
	\begin{tabular}{ | m{4.3cm}  |m{7.3cm} | } 
		\hline 
		Constructing $Z_1,Z_2,Z_3,Z_4$ & $(A,H)\overset{Z_1}{\mapsto} (J,\tilde{G}) \overset{Z_2}{\mapsto} (J, \tilde{G}_1) \overset{Z_3}{\mapsto} (J,\tilde{G}_2) \overset{Z_4}{\mapsto} (J,P)$\\[1ex]
		\hline
		Step 1 (constructing $Z_1$) &$ (A,H)\overset{Z_1}{\mapsto} (J,\tilde{G})$\\[1ex]
		Step 2 (constructing $Z_2$) &$(A,H)\mapsto\overset{Z_1}{\mapsto}\overset{Z_2}{\mapsto}\mapsto{}(J,\tilde{G}_1)$\\[1ex]
		Step  3  (constructing $Z_3$) &$(A,H)\mapsto\overset{Z_1}{\mapsto}\overset{Z_2}{\mapsto}\overset{Z_3}{\mapsto}\mapsto (J,\tilde{G}_2)$\\[1ex]
		Step  4  (constructing  $Z_4$)& $(A,H)\mapsto\overset{Z_1}{\mapsto}\overset{Z_2}{\mapsto}\overset{Z_3}{\mapsto}\overset{Z_3}{\mapsto}\overset{=N}{\mapsto}\mapsto (J,P)$\\ [1ex]
		\hline
	\end{tabular}
	\caption{Constructing N}
	\label{table:constructing N}
\end{table}

We will construct it in a way that the columns of  each of four matrices $Z_1$, $Z_1Z_2$, $Z_1Z_2Z_3$, $Z_1Z_2Z_3Z_4$ capture the  $\gamma$-CS bases of $A$, and  columns  of the last  matrix $N=Z_1Z_2Z_3Z_4$ capture not only  $\gamma$-CS, but also the desired $\gamma-$FOCS basis.

\begin{proof}[Proof of the Theorem {\ref{thm:focs1}}] Let us first introduce the following notation that we will use throughout this proof. We denote by $I_k$ (resp. $\widetilde{I}_k$) the identity (resp. sip) matrix of the size of $J(\lambda_k)$ from \eqref{hatJ}.
	
Now let us go through the procedure step-by-step:
	
{\bf Step 1 (Mapping $A$ $\rightarrow$ $J$. Constructing $Z_1$)} We know from Proposition  \ref{thm:main1} that there exists $T$ whose columns capture the FO-basis of $(A,H)$, that is $(A,H) \overset{T}{\mapsto} (J,P)$. Let us partition
\begin{equation*}\label{T_1}
T=[T_1 \lvert \ldots \lvert T_{\alpha} \lvert T_{\alpha+1} \lvert \ldots \lvert T_{\beta}]
\end{equation*}
in correspondence with \eqref{hatJ}. The columns of $T$ do not necessarily form a  $\gamma$-CS basis yet,  that is
\begin{equation*}
T_k=[L_{k} \lvert S_{k}] \quad\text{ for }\quad k=\alpha+1, \ldots, \beta
\end{equation*} 
with some  matrices $L_k$ and $S_k$ possibly not to related to each other.
Now, define
\begin{equation}\label{def:T_1}
Z_1=[T_1 \lvert \ldots \lvert T_{\alpha} \lvert {M}_{\alpha+1} \lvert \ldots \lvert {M}_{\beta}],
\end{equation}
where
\begin{equation}\label{def:T_k}
M_{k}=[L_{k} \lvert \gamma \overline{L}_{k}]  \quad\text{ for }\quad k=\alpha+1, \ldots, \beta.
\end{equation} Thus, we got $Z_1$ for  $(A,H)\overset {Z_1}{\mapsto} (J,\widetilde{G})$. Moreover, by using the same partition of $\widetilde{G}$ as in  \eqref{eq:P}, for each block corresponding to non-real eigenvalues we observed the form as in \eqref{lemma:G} for $k=\alpha+1,\dots,\alpha+\beta$
\begin{equation*} \widetilde{G_k}=
\left[ \begin{array}{cccccc}
0& G_k^{*} \\
G_k & 0
\end{array}\right],\quad\text{ where }\quad
G_k= 
\left[ \begin{array}{ccccc}
0 & \ldots & 0 &g_0^{(k)}\\
\vdots & \invddots & g_0 ^{(k)}& g_1^{(k)}\\
0 & \invddots & \invddots & \vdots\\
g_0^{(k)} & g_1^{(k)} & \cdots & g_{k-1}^{(k)}\\
\end{array}\right]
\end{equation*} 
and $\widetilde{G}_k$ is the sip matrix $\epsilon_k\widetilde{I}_k$ ($\epsilon=\pm1$)  for $k=1,\ldots,\alpha$.

By construction the columns of matrix $Z_1$ capture the  $\gamma$-CS basis of $A$. However, it is not necessarily the FO basis yet.


{\bf Step 2 (Forcing the antidiagonal of $\widetilde{G}_1$ to be real. Constructing $Z_2$)} During this step, we use the argument and modulus of $g_0^{(k)}=  e^{i\varphi_k}r_k$  for $k=\alpha+1,\dots,\alpha+\beta$ to define \begin{equation*}
Z_2=I_1 \oplus \ldots \oplus I_{\alpha} \oplus Z^{(2)}_{\alpha+1} \oplus \ldots \oplus  Z^{(2)}_{\beta},
\end{equation*}
where
\begin{equation}\label{Z2}
Z_k^{(2)}=\left[ \begin{array}{cc}
e^{\frac{-{i\varphi_k}}{2}}\sqrt{\frac{s_k}{r_k}}I & 0\\
0 & e^{\frac{{i\varphi_k}}{2}}\sqrt{\frac{s_k}{r_k}}I \\
\end{array}\right],
\end{equation}
with $s_k=|\text{Re}\,g_0^{(k)}|$.
Then,
\begin{displaymath} {Z_2}^{*}\widetilde{G}{Z_2}=
\widetilde{G_1}, \qquad {Z_2}^{-1}J{Z_2}=J,
\end{displaymath} 
and the corresponding to  \eqref{hatJ} partition of $\widetilde{G_1}$ is such that $\widetilde{G_k}^{(1)}$'s are unchanged for $ k=1,\ldots,\alpha$ and $\widetilde{G_k}^{(1)}=\left[ \begin{array}{cccccc}
0&\left(G_k^{(1)}\right)^* \\
G_k^{(1)}& 0
\end{array}\right]$
where
${G_k^{(1)}}= e^{-{i\varphi_k}}{\frac{s_k}{r_k}}G_k=\left[ \begin{array}{ccccc}
0 & \ldots & 0 &s_k\\
\vdots & \invddots & s_k & *\\
0 & \invddots & \invddots & \vdots\\
s_k & * & \cdots & *\\
\end{array}\right]$ for $k=\alpha+1,\dots,\alpha+\beta$.

If we put $Z_1$ and $Z_2$ together, i.e. 
\begin{displaymath}
Z_1Z_2=[T_1\lvert \ldots\lvert  T_{\alpha}\lvert M_{\alpha+1}^{(0)} \lvert\dots\lvert M_{\beta}^{(0)} ].
\end{displaymath}
we get that
\begin{equation}\label{Z1Z2}
M_k^{(0)}= [e^{\frac{i\varphi_k}{2}} \sqrt{\frac{s_k}{r_k}} L_k  \lvert  \gamma e^{\frac{-i\varphi_k}{2}} \sqrt{\frac{s_k}{r_k}} \overline{L}_k] =: [Q_k^{(0)} \lvert \gamma \overline{Q}_k^{(0)}]\quad\text{ for }\quad k=\alpha+1, \ldots, \beta,
\end{equation} where $Q_k^{(0)}=e^{\frac{i\varphi_k}{2}} {\sqrt{\frac{s_k}{r_k}}} L_k$. As we see from \eqref{Z1Z2},  the columns $Z_1Z_2$ still capture the  $\gamma$-CS basis of $A$.


{\bf Step 3 (Making $G_1$ the unit-antidiagonal. Constructing $Z_3$)} This time we just scale all the columns. We use the result from Step 2 to define 
\begin{equation*}
Z_3=I_1 \oplus  \ldots \oplus  I_{\alpha} \oplus  Z^{(3)}_{\alpha+1} \oplus  \ldots \oplus  {Z}_{\beta}^{(3)},
\end{equation*}
where
$
Z_k^{(3)}=\frac{1}{\sqrt{s_k}}I
$  for $k=\alpha+1,\dots,\alpha+\beta$. That is we simply scaling $\widetilde{G}_1$.
Then
\begin{displaymath}Z_3^{*}\widetilde{G}_1Z_3= \widetilde{G}_2, \qquad Z_{3}^{-1}{J}Z_3={J},
\end{displaymath} 
where as before we have the block structure of $\widetilde{G_2}$ such that  $\widetilde{G_k}^{(2)}$  being a sip matrix  of the same size as $J({\lambda_k})$ for $ k=1,\ldots,\alpha$ and $\widetilde{G_k}^{(2)}=\left[ \begin{array}{cccccc}
0&\left(G_k^{(2)}\right)^{*} \\
G_k^{(2)}& 0
\end{array}\right]$ with 
$
{G}_k^{(2)}=
\left[ \begin{array}{ccccc}
0 & \ldots & 0 &1\\
\vdots & \invddots &1&* \\
0 & \invddots  &  \invddots & \vdots\\
1 & *& \dots &*\\
\end{array}\right]$ for $k=\alpha+1,\dots,\alpha+\beta$.

We constructed such  matrix $Z_3$ in order to have
\begin{displaymath}
Z_1Z_2Z_3=[T_1\lvert \ldots\lvert T_{\alpha}\lvert 
M_{\alpha+1}^{(1)} \lvert \dots \lvert
M_{\beta}^{(1)}] .
\end{displaymath} 
where we get the following form for $M_k^{(1)}$'s
\begin{displaymath}
M_{k}^{(1)}= [ \frac{1}{\sqrt{s}} Q_k^{(0)} \lvert \gamma  \frac{1}{\sqrt{s}} \overline{Q}_k^{(0)}]=: [Q_k^{(1)} \lvert \gamma \overline{Q}_k^{(1)}],
\end{displaymath}
by letting $Q_k^{(1)}= \frac{e^{\frac{i\varphi_k}{2}}}{\sqrt{r_k}} L_k$. Then, we have $(J,\widetilde{G}_1) \overset{Z_{3}}{\mapsto}(J,\widetilde{G_2})$. Moreover,  $(A,H) \overset{Z_1Z_{2}Z_3}{\mapsto} (J,\widetilde{G_2})$. Therefore, the columns of $Z_1Z_2Z_3$ capture the~$\gamma$-CS basis of $A$ but as before we cannot guarantee it to be FO yet. The flipped orthogonality property is still missing.

\vskip 10pt


{\bf Step 4 (Zeroing sub-antidiagonal entires of  $G_2$). Constructing $Z_4$} Now, we look for a matrix $Z_4$ such that $N=Z_1Z_2Z_3Z_4$ has all the properties formulated in the theorem. Let us define  $Z_4$ in the following way 
\begin{displaymath}
Z_4= I_1\oplus  \ldots\oplus  I_\alpha\oplus  Z_{\alpha+1}^{(4)}\oplus  \ldots\oplus  Z_{\alpha+ \beta}^{(4)},\quad 
\end{displaymath} where $ Z_k^{(4)}=
\left[ \begin{array}{ccc}
	F_k &0 \\
	0 & \overline{F}_k
\end{array}\right]$ for $k=\alpha+1,\dots,\alpha+\beta$. 
The exact formulas for $F_k$'s will be found later. Then, once again, we get that
\begin{displaymath}
Z_1Z_2Z_3Z_4=[T_1\lvert \ldots\lvert T_{\alpha}\lvert 
M_{\alpha+1}^{(2)} \lvert \dots \lvert
M_{\beta}^{(2)}],
\end{displaymath}where
\begin{equation}{\label{n}}
M_{k}^{(2)}= [Q_k^{(1)} \lvert \gamma \overline{Q}_k^{(1)}]\left[ \begin{array}{cccccc}
F_k &0 \\
0 & \overline{F}_k
\end{array}\right]= [Q_k^{(1)}F_k \lvert \gamma \overline{Q}_k^{(1)}\overline{F}_k]=[Q_k^{(2)} \lvert \gamma\overline{Q}_k^{(2)}]
\end{equation}
with $Q_k^{(2)}=e^{\frac{i\varphi_k}{2}} \frac{1}{\sqrt{r_k}} L_kF_k$ will have the desired $\gamma$-CS form with any invertible $F_k$. In addition, we want  $F_k$ to  satisfy 
\begin{equation}\label{eq:t_3} \quad \left(Z_k^{(4)}\right)^{-1}
\left[\begin{array}{cc}
J(\lambda_k)& 0\\
0& {J(\overline{\lambda}_k)}
\end{array}\right]Z_k^{(4)}=
\left[\begin{array}{cc}
J(\lambda_k)& 0\\
0& {J(\overline{\lambda}_k)}
\end{array}\right].\end{equation}
The  relation in \eqref{eq:t_3} is satisfied if $F_k$ is an upper triangular Toeplitz matrix that commutes with $J(\lambda_k)$.
Furthermore, an immediate computation shows that 
\begin{displaymath}Z_k^{(4)*}\widetilde{G_k}^{(2)}Z_k^{(4)}=
\left[ \begin{array}{cccccc}
F_{k}^*& 0 \\
0& F_k^{T}
\end{array}\right]
\left[\begin{array}{cc}
0&\left(G_{k}^{(2)}\right)^*\\
G_k^{(2)}&0
\end{array}\right]
\left[\begin{array}{cc}
F_k&0\\
0&\overline{F}_k
\end{array}\right]
\end{displaymath}
\begin{equation*}
=
\left[\begin{array}{cc}
0&F_k^{*}G_k^{(2)*}\overline{F}_k\\
F_k^TG_k^{(2)}F_k& 0
\end{array}\right]
=
\left[\begin{array}{cc}
0&(F_k^TG_k^{(2)}F_k)^*\\
F_k^TG_k^{(2)}F_k&0
\end{array}\right].
\end{equation*} 
 
Hence, if $F_k$ satisfies
\begin{equation}\label{eq:f} 
F_k^TG_k^{(2)}F_k=\widetilde{I}_k
\end{equation} 
for $k=\alpha+1,\dots,\alpha+\beta$, then
\begin{equation*}\label{eq:t_3 1}
Z_4^{*}\widetilde{G_2}Z_4=P
\end{equation*}
and we are done.

Therefore, it remains to solve \eqref{eq:f} for an upper triangular Toeplitz matrix $F_k$, and the proof will be complete. Since $\widetilde{I}^2=I$, we have 
\begin{displaymath} F_k^TG_k^{(2)}\widetilde{I}_k\widetilde{I}_kF_k\widetilde{I}_k= \widetilde{I}_k \widetilde{I}_k=I_k,\hspace{.5cm} \text{ then } \hspace{.5cm} \underbrace{F_k^\top}_{F}\underbrace{G_k^{(2)}\widetilde{I}_k}_{G_3}\underbrace{\widetilde{I}_kF_k\widetilde{I}_k}_{F}= I_k.
\end{displaymath}
Since the lower triangular Toeplitz matrices $F$ and $G_3$ commute,  we have
\begin{displaymath}
F^2G_3=I_k. 
\end{displaymath}
We see that $F^2$ is the inverse of the unit lower triangular Toeplitz matrix  $G_3$. Hence, $F$ can be chosen to be a unit lower triangular Toeplitz matrix. Thus, $F_k=F^\top$ is a unit upper triangular Toeplitz matrix and, hence, it commutes with $J(\lambda_k)$.


{\bf Step 5 (Combining the affiliation matrices $Z_1,Z_2, Z_3$, and $Z_4$)} By combining all the previous steps, we get the desired conclusion \begin{displaymath}
(A,H) \overset{N}{\mapsto} (J,P)
\end{displaymath}where $N=Z_1Z_2Z_3Z_4$. In other words, the columns of $N$ form a flipped-orthogo\-nal Jordan basis. Furthermore, \eqref{n} shows that this Jordan basis is also $\gamma$-conjugate-symmetric. This concludes the proof of Theorem {\ref{thm:focs1}}.
\end{proof}


\section{Stability of $\gamma$-FOCS Canonical Forms and Bases}

Now, we are ready to state  the main result of this section. 

\begin{theorem}{\bf(Lipschitz stability of $\gamma$-FOCS basis)} \label{thm:main2} Let $ A_0 \in \mathbb{R}^{ n\times n}$ be a fixed $H_0$-selfadjoint matrix, where $H_0\in \mathbb{R}^{n \times n}$,  and let the columns of this matrix capture the FOCS-basis, that is, 
	\begin{displaymath}
	(A_0,H_0)\overset{T_0}{\mapsto}(J,P).
	\end{displaymath}
	There exist constants $K,\delta > 0$ (depending on  $A_{0}$ and $ H_0 $ only)  such that  the following assertion holds for any $H$-selfadjoint matrix $A$ with the pair $(A,H)$ real and $A$ invertible and similar to $A_0$, and
	
	\begin{equation}\label{delta}
	\| A-A_0\| + \| H-H_0\| < \delta,
	\end{equation}
	there is an invertible matrix $N \in \mathbb{R}^{n \times n}$, whose columns capture the $\gamma$-FOCS basis of $A$, i.e. 
	\begin{displaymath}
	(A,H)\overset{N}{\mapsto}(J,P),
	\end{displaymath}and such that 
	\begin{equation}\label{NN0}
	\| N - T_{0} \| \leq K(\| A-A_0 \| + \| H-H_0 \|).
	\end{equation} 
\end{theorem}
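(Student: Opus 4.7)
The plan is to combine two ingredients: the Bella--Olshevsky--Prasad Lipschitz stability for FO bases (Proposition~\ref{thm:stabilityFO}), which supplies an FO affiliation matrix $T$ close to $T_0$, together with the four-step procedure from the proof of Theorem~\ref{thm:focs1} that upgrades a FO matrix to a $\gamma$-FOCS one. The key observation making this strategy work is that the four-step construction, when applied to the unperturbed data $(T_0,H_0)$, reduces to the identity in each of the last three steps, so that its output equals $T_0$ itself; hence a perturbation argument will control $\|N-T_0\|$ by the data perturbation.

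First, by Proposition~\ref{thm:stabilityFO} there exist $K_0,\delta_0>0$ such that whenever $\|A-A_0\|+\|H-H_0\|<\delta_0$ there is a (complex) matrix $T$ with $(A,H)\overset{T}{\mapsto}(J,P)$ and $\|T-T_0\|\le K_0(\|A-A_0\|+\|H-H_0\|)$. I then apply the four-step construction of Theorem~\ref{thm:focs1} with $T$ in place of the initial FO matrix: $Z_1=Z_1(T)$ is obtained by \eqref{def:T_1}--\eqref{def:T_k}; $Z_2=Z_2(T,H)$ by \eqref{Z2} using the entries $g_0^{(k)}$ of $\widetilde G=Z_1^{*}HZ_1$; $Z_3=Z_3(T,H)$ by scaling with $1/\sqrt{s_k}$; and $Z_4=Z_4(T,H)$ by choosing the unit upper triangular Toeplitz blocks $F_k$ that solve \eqref{eq:f}. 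Setting $N=Z_1Z_2Z_3Z_4$, Theorem~\ref{thm:focs1} guarantees that the columns of $N$ form a $\gamma$-FOCS basis of $(A,H)$.

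Next I verify that $N(T_0,H_0)=T_0$. Because $T_0$ already has the $\gamma$-CS block form \eqref{eq:t1}, the rearrangement in \eqref{def:T_1}--\eqref{def:T_k} gives $Z_1=T_0$. Because $(A_0,H_0)\overset{T_0}{\mapsto}(J,P)$, we have $\widetilde G=P$, so $g_0^{(k)}=1$, $r_k=s_k=1$, $\varphi_k=0$, whence $Z_2=Z_3=I$. Then $G_k^{(2)}=\widetilde I_k$, and equation~\eqref{eq:f} admits $F_k=I$ as its unit upper triangular Toeplitz solution, so $Z_4=I$. Consequently the construction is a small perturbation of the identity on the FOCS datum, and this is precisely why the composed map is well-behaved near $(T_0,H_0)$.

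The main work is to verify that each of the four maps $(T,H)\mapsto Z_i$ is Lipschitz on a neighborhood of $(T_0,H_0)$. The map $Z_1$ is componentwise polynomial in $T$ (permutations and conjugations of columns). The entries of $\widetilde G=Z_1^{*}HZ_1$ are polynomial in $(T,H)$, and the antidiagonal entries $g_0^{(k)}$ stay in a neighborhood of $1$, so $r_k,\varphi_k,s_k$ and hence $Z_2,Z_3$ are smooth there. For $Z_4$, the analysis following \eqref{eq:f} reduces the problem to $F^2G_3=I$ in the commutative algebra of lower triangular Toeplitz matrices, with $G_3$ unit lower triangular and close to $I$; the unique unit lower triangular Toeplitz square root of $G_3^{-1}$ depends smoothly on $G_3$ (it can be computed entrywise by forward substitution), so $F_k$, and thus $Z_4$, depend smoothly on $(T,H)$. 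Combining these Lipschitz estimates with $N(T_0,H_0)=T_0$ yields, for suitable $K_1>0$,
\[
\|N-T_0\|\le K_1\bigl(\|T-T_0\|+\|H-H_0\|\bigr)\le K_1(K_0+1)\bigl(\|A-A_0\|+\|H-H_0\|\bigr),
\]
which is \eqref{NN0} with $K=K_1(K_0+1)$. I expect the most delicate point to be the smooth dependence of $F_k$ on $(T,H)$ through the Toeplitz square-root equation and the consistent selection of the branch so that $F_k\to I$ as $(T,H)\to(T_0,H_0)$; once that is handled, shrinking $\delta$ if necessary keeps all intermediate quantities in their respective domains of smoothness.
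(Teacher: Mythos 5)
Your proposal is correct and follows essentially the same route as the paper: invoke Proposition~\ref{thm:stabilityFO} to get an FO matrix $T$ close to $T_0$, run the four-step construction $N=Z_1Z_2Z_3Z_4$ of Theorem~\ref{thm:focs1} on the perturbed data, observe that on $(T_0,H_0)$ the construction fixes $T_0$ (with $Z_2=Z_3=Z_4=I$), and control each factor near that base point. The paper merely makes your ``combine the Lipschitz estimates'' step explicit via a telescoping product bound (Lemma~\ref{lemma2}) and carries out the entrywise estimates for $Z_2,Z_3,Z_4$ in terms of $\|\widetilde G-P\|$, which is exactly the quantitative content your smoothness argument supplies.
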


This theorem says that $\gamma$-FOCS bases are Lipshitz stable under small perturbations, preserving the Jordan structure of $A_0$. To prove Theorem {\ref{thm:main2}} we need a technical result stated in Lemma \ref{lemma2} and in this proof we need  to  modify the notations as described next.

\begin{displaymath}
\bf{\Theta-Calculus}
\end{displaymath} In this paper we use and prove a large number of inequalities with the right-hand side 
\begin{displaymath}
K(\| A-A_0\| + \| H-H_0 \|)
\end{displaymath}
with some constant $K>0$. Each of these inequalities hold for its own $K$ implying cumbersome notations (introducing large number of $K_i$'s and the necessity to explain at each step that those $K_i$'s are different). To avoid it we introduce the notation
\begin{displaymath}
\Theta =K(\| A-A_0\| + \| H-H_0 \|),
\end{displaymath}where we do not keep track of the actual value of $K$. Though it is not quite rigorous (e.g, $3\Theta\leq 2\Theta$ is a valid inequality) but this does not lead to a confusion, and greatly simplifies the presentation. However, one has to always keep in mind that in different inequalities $\Theta$'s may invoke different values of $K$.


\begin{lemma}\label{lemma2} Let $T_0$ and $N$ be given matrices such that  $N=Z_1Z_2 Z_3Z_4$ with some $Z_1$, $Z_2$, $Z_3$, $Z_4$. For any $H$-selfadjoint matrix $A$, having the same Jordan structure as $A_0$, if 
	\begin{equation}\label{boundt0}
	\| Z_1-  T_0 \| <  \Theta,
	\end{equation}
	\begin{equation}\label{boundtk}
	\| Z_k-  I \| < \Theta \quad\text{ for } \quad k=2,3,4,
	\end{equation} 
	then we have
	\begin{equation*}
	\| N-T_0\|   <   \Theta.
	\end{equation*}
	
\end{lemma}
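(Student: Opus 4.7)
The plan is to bound $\|N - T_0\|$ by a standard triangle-inequality expansion of the four-factor product $N = Z_1 Z_2 Z_3 Z_4$, leveraging submultiplicativity of the operator norm together with the smallness of each $Z_k - I$ for $k=2,3,4$ and of $Z_1 - T_0$. The crucial algebraic step I would use is the single telescoping identity
\[
N - T_0 \;=\; Z_1 Z_2 Z_3 Z_4 - T_0 \;=\; (Z_1 - T_0) \;+\; Z_1 \bigl( Z_2 Z_3 Z_4 - I \bigr),
\]
which reduces the problem to controlling $\|Z_1\|$ and $\|Z_2 Z_3 Z_4 - I\|$ separately.

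For $\|Z_1\|$, the triangle inequality combined with \eqref{boundt0} gives $\|Z_1\| \leq \|T_0\| + \Theta$, which is a constant depending only on $A_0$ and $H_0$ once $\delta$ (and hence $\Theta$) is fixed. For the remaining factor, I would further telescope
\[
Z_2 Z_3 Z_4 - I \;=\; (Z_2 - I)\, Z_3 Z_4 \;+\; (Z_3 - I)\, Z_4 \;+\; (Z_4 - I),
\]
and apply \eqref{boundtk} to each difference together with the trivial bound $\|Z_k\| \leq 1 + \Theta$ for $k = 3, 4$. Collecting terms, $\|Z_2 Z_3 Z_4 - I\|$ is bounded by a constant times $\Theta$, and consequently $\|Z_1\| \cdot \|Z_2 Z_3 Z_4 - I\|$ admits the same type of bound. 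Combined with $\|Z_1 - T_0\| < \Theta$ from the first telescoping, the desired conclusion $\|N - T_0\| < \Theta$ follows in the $\Theta$-calculus convention.

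The only point requiring care, rather than a genuine obstacle, is ensuring that the multiplicative constants generated during the expansion (notably the factor $\|T_0\|$ and the cross-terms of order $(1+\Theta)^2$ arising from the telescoping) are absorbed into $\Theta$ in accordance with the convention announced just before the lemma. Since $\Theta$ can be made arbitrarily small by shrinking $\delta$ in \eqref{delta}, and since every accumulated multiplicative constant depends only on $A_0$ and $H_0$, this absorption is legitimate and completes the plan.
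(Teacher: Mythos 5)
Your proposal is correct and follows essentially the same route as the paper: both arguments telescope the product $Z_1Z_2Z_3Z_4 - T_0$ into a sum of terms each containing exactly one small factor ($Z_k - I$ or $Z_1 - T_0$), bound the remaining factors by $\|T_0\|+\Theta$ or $1+\Theta$ via the triangle inequality, and absorb the resulting constants into $\Theta$. The only difference is cosmetic — you peel factors off on the left and group $Z_2Z_3Z_4$ first, while the paper peels them off on the right one at a time — so no further comparison is needed.
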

\begin{proof}
	Suppose that \eqref{boundt0} and  \eqref{boundtk} are true.  Obviously, the chain of inequalities
	\begin{equation*}
	\| Z_1 \|= \| Z_1-T_0 +T_0 \| \leq \| Z_1 -T_0\| + \| T_0 \|  < \Theta+ \| T_0\| 
	\end{equation*}holds true due to \eqref{boundt0}.
	Now, using inequalities $\eqref{boundt0} \  \text{and} \ \eqref{boundtk} $ again, we get
	\begin{displaymath}
	\| N-T_0\| = \| Z_1Z_2Z_3Z_4-T_0 \|= \| Z_1Z_2Z_3Z_4 - Z_1Z_2Z_3 + Z_1Z_2Z_3 -Z_1Z_2 +Z_1Z_2  
	\end{displaymath}
	\begin{displaymath}
	-Z_1+Z_1 -T_0 \|= \| Z_1Z_2Z_3(Z_4 -  I) \| + \| Z_1Z_2(Z_3-I ) \|  + \| Z_1(Z_2-I) \| 
	\end{displaymath}
	\begin{displaymath}
	+ \| Z_1-T_0 \| 
	\leq \| Z_1Z_2Z_3  \|  \| Z_4 - I \|+ \| Z_1Z_2 \| \| Z_3 -I \| +\| Z_1\| \| Z_2-I\| 
	\end{displaymath}
	\begin{displaymath}
	+  \| Z_1-T_0 \| = \| Z_1Z_2Z_3 -Z_1Z_2 +Z_1Z_2-Z_1+Z_1 \| \| Z_4- I \|+ \| Z_1Z_2-Z_1
	\end{displaymath}
	\begin{equation*}
	+Z_1 \| \| Z_3 -I \| + \| Z_1\|\| Z_2-I\|+  \| Z_1-T_0 \|\leq \big(\| Z_1Z_2(Z_3 - I) \|
	\end{equation*}
	\begin{displaymath}
	+\| Z_1(Z_2-I)\|+\| Z_1\| \big) \| Z_4 - I \|
	+\big ( \| Z_1(Z_2-I) \|+ \| Z_1\| \big)\| Z_3 -I \|
	\end{displaymath}
	\begin{displaymath}
	+\| Z_1\| \| Z_2-I\|
	+\| Z_1-T_0 \|  
	<  \big ( \| Z_1Z_2 \| \| Z_3 - I \| + \| Z_1\| \| Z_2- I \|
	\end{displaymath}
	\begin{equation*}
	+\| Z_1 \| \big ) \Theta + \big ( \| Z_1 \|  \| Z_2 - I \|+ \| Z_1 \|  \big )  \Theta +  \| Z_1 \| \Theta + \Theta \quad 
	\end{equation*}
	\begin{displaymath}
	< \big (  \| Z_1Z_2 -Z_1 +Z_1 \| \Theta + \| Z_1 \| \Theta + \| Z_1 \|  \big ) \Theta + \big( \| Z_1 \| \Theta+\| Z_1 \| \big) \Theta
	\end{displaymath}
	\begin{displaymath}
	+ \| Z_1\| \Theta + \Theta=\big( \| Z_1(Z_2 -I) +Z_1 \| \Theta  + \| Z_1\| \Theta+ \| Z_1 \| \big) \Theta
	\end{displaymath}
	\begin{displaymath}
	+ \big( \| Z_1 \| \Theta+\| Z_1 \| \big) \Theta + \| Z_1\| \Theta + \Theta< \big( ( \| Z_1\| \Theta +\| Z_1 \| ) \Theta  + \| Z_1\| \Theta
	\end{displaymath}
	\begin{displaymath}
	+ \| Z_1 \| \big) \Theta + \big( \| Z_1 \| \Theta+\| Z_1 \| \big) \Theta + \| Z_1\| \Theta + \Theta	\end{displaymath}
	\begin{equation*}< \Theta \big( ( \| T_0 \| + \Theta)
	\cdot ({\Theta}^2 +3 \Theta+1)+1 \big) \leq \Theta.  
	\end{equation*}
\end{proof}

Now we are in a position to prove  Theorem {\ref{thm:main2}}.


{\bf Plan of the proof  of Theorem {\ref{thm:main2}}:} In fact, the desired matrix $N$ satisfying \eqref{NN0} has already been designed in the proof of Theorem \ref{thm:focs1}. It was  constructed there in four steps with matrix $Z_k$ chosen in the $k$th step (see Table~\ref{table:constructing N}). $N$ was given by  $N=Z_1Z_2Z_3Z_4$. Here, we repeat these steps, preserving all the notations from the proof of Theorem \ref{thm:focs1}, and prove the inequalities \eqref{boundt0} and \eqref{boundtk} for each step. In view of Lemma \ref{lemma2} this will completely prove  Theorem \ref{thm:main2}.
\begin{proof}[Proof  of Theorem {\ref{thm:main2}}]
We are ready to proceed with the 1st step of this plan.

{\bf Stability of Step 1.
} Again, let us consider
$(A, H)\overset{T}{\mapsto}(J, P)$ { and } $(A_0, H_0)\overset{T_0}{\mapsto}(J, P)$.
Proposition \ref{thm:stabilityFO} yields that $T$ can be chosen such that 
\begin{equation}\label{TT0theta}
\| T-T_0 \| < \Theta.
\end{equation}Let us partition $T_0$ and $T$ in correspondence with \eqref{hatJ}
\begin{displaymath}
T=[T_1 \lvert \ldots \lvert T_{\alpha} \lvert T_{\alpha+1} \lvert \ldots \lvert T_{\beta}],
\end{displaymath} 
\begin{displaymath}
T_0= [T_1^{(0)} \lvert \ldots \lvert T_{\alpha}^{(0)} \lvert T_{\alpha+1}^{(0)} \lvert \ldots \lvert T_{\beta}^{(0)}].
\end{displaymath} Recall that $T_0$ captures the $\gamma$-FOCS basis of $A_0$, while $T$, according to Proposition~{\ref{thm:stabilityFO}},  captures only the FO-basis not necessarily $\gamma$-CS. Hence,
\begin{equation*}
T_{k}^{(0)}=[L_{k}^{(0)} \lvert \gamma \overline{L_{k}}^{(0)} ], \quad \text{ while }\quad T_k=[L_{k} \lvert S_{k}] \ \text{ for }\quad k=\alpha+1, \ldots, \beta
\end{equation*} 
with some possibly not related to each other matrices $L_k$ and $S_k$. 

According to Proposition \ref{thm:stabilityFO}
\begin{displaymath}
\| L_k - L_k^{(0)} \| \leq \| {T} -T_0 \|\text{ and }\| S_k - \gamma \overline{L_k}^{(0)} \| \leq \| {T} -T_0 \|.
\end{displaymath}
The former and \eqref{TT0theta} imply that the matrix $Z_1$ defined in \eqref{def:T_1} satisfies 
\begin{equation}\label{perturbationZ1T0}
\| Z_1 - T_{0} \| \leq\Theta.
\end{equation} 

Note that 
$$
    \|\tilde{G}-P\|=\|Z_1^*HZ_1-T_0^*H_0T_0\|\le\|Z_1-T_0\|\|H\|\|Z_1\|+\|T_0\|\|H-H_0\|\|Z_1\|+$$
    \begin{equation}\label{GP}
+\|T_0\|\|H_0\|\|Z_1-T_0\|\le\Theta.
\end{equation}
The first inequality in Lemma \ref{lemma2} which is \eqref{boundt0} holds true. Let us check the rest.


{\bf Stability of Step 2.} Our goal is to prove the following  inequality for $Z_2$ defined in \eqref{Z2}
\begin{equation}\label{boundZ_2}
\| I-Z_2 \|  \leq \Theta.
\end{equation}
From the way we defined $Z_2$, we got 
\begin{displaymath}
\| I - Z_2 \| \leq \underset{k}{\max} \Big \{  \Big \lvert 1- e^{-\frac{i\varphi_k}{2} }\sqrt{\frac{s_k}{r_k}}  \Big \lvert, \Big \lvert 1- e^{\frac{i\varphi_k}{2} }\sqrt{\frac{s_k}{r_k}}  \Big \lvert  \Big \}.
\end{displaymath}
The next part is analogous to the argument in \cite[Lemma 4.5]{BOP}. We have
\begin{equation*}
\left| 1- e^{-\frac{i\varphi_k}{2} }\sqrt{\frac{s_k}{r_k}}  \right|= \left| \frac {\sqrt{r_ke^{i\varphi_k}} - \sqrt{s_k}}{\sqrt{r_ke^{i\varphi_k}}}\right| =\left|  \frac {\sqrt{g_0^{(k)}} -\sqrt{s_k}}{\sqrt{g_0^{(k)}}}\right| =\left|\frac {g_0^{(k)} -s_k}{g_0^{(k)}+\sqrt{g_0^{(k)}s_k}}\right|
\end{equation*}
\begin{equation}\label{1}
\leq \frac {\vert  g_0^{(k) }-1 \vert }{\vert g_0^{(k)} \vert }\leq \frac{1}{\vert g_0^{(k)} \vert} \| \tilde{G} -P \| \leq \Theta.
\end{equation}
The last inequality in \eqref{1} follows from  \eqref{GP}.
Therefore, we see that the desired inequality for $Z_2$ \eqref{boundZ_2} is now proven.


{\bf Stability of Step 3.}  During this step, we need to  prove the following inequality
\begin{equation}\label{boundZ_3}
\| I-Z_3 \|  \leq \Theta.
\end{equation} We constructed $Z_3$ so
\begin{displaymath}
\| I - Z_3 \| \leq \underset{k}{\max} \|  I_k -\frac{1}{\sqrt{s_k}}I_k \| \leq  \underset{k}{\max} \Big \lvert 1 -\frac{1}{\sqrt{s_k}}  \Big \lvert.
\end{displaymath}
Note that $s_k$ is a real number. Hence we get
\begin{displaymath}
\Big \lvert 1- \frac{1}{\sqrt{s_k}} \Big \lvert = \Big \lvert  \frac {\sqrt{s_k } - 1}{\sqrt{s_k }}\Big \lvert =\Big \lvert  \frac {s_k -1}{\sqrt{s_k }(\sqrt{s_k }+1)}\Big \lvert =\Big \lvert  \frac {s_k -1}{s_k +\sqrt{s_k }}\Big \lvert \leq \frac {\vert  s_k -1 \vert }{\vert s_k  \vert } 
\end{displaymath}
\begin{equation*}
\leq \frac{\vert g_0^{(k)} -1 \vert}{\vert s_k  \vert} \leq \frac {\| \tilde{G} -P \|}{\vert s_k \vert} \leq \Theta.
\end{equation*}
Thus, \eqref{boundZ_3} holds true.


{\bf Stability of Step 4.} To prove the bound on $\| I - Z_4 \| $, we observe first that
\begin{equation*}
\| \tilde{G}_1 -P \|= \| Z_2\tilde{G}Z_2 -P \| \leq
\| Z_2^{*}\tilde{G}Z_2 - Z_2^{*}PZ_2+Z_2^{*}PZ_2 - Z_2^{*}P 
\end{equation*}
\begin{displaymath}
+Z_2^{*}P -P\| \leq \|  Z_2^{*}(\tilde{G}-P)Z_2 \| + \| Z_2^{*}P(Z_2-I) \| + \| Z_2- I \| \| P \|
\end{displaymath}
\begin{equation*}
\leq \| Z_2 \|^2 \| \tilde{G}-P \| +( \| Z_2\| +1)\| P \|  \| Z_2-I \|
\end{equation*}
\begin{equation}\label{perturbationGP}
< \big( (1+ \Theta)^2 +(2+\Theta)^2  \big)\Theta \leq \Theta
\end{equation} 
follows from \eqref{boundZ_2}.

Also, note that $\tilde{G_2}$ is a lower anti-triangular Hankel matrix with 1's on the main antidiagonal. Thus, $\tilde{G}_2\tilde{I}=I + E$, where $E$ is a nilpotent matrix with $E^n=0$ and $\| E \| \leq M$ for some $n\in\mathbb{N}$ and $M>1$. Define $F$ as follows.
\begin{displaymath}
F=f(E)=
\sum_{k=0}^{n-1} \frac{(-1)^{k}}{2k}\Big( \prod_ {j=1}^{k} \frac {1+2j}{2j} \Big) E^{k}
\end{displaymath} where $f(E)= \sqrt{(I+E)^{-1}}$. Note that $E=\tilde{G_2}\tilde{I} - I$. So,
\begin{displaymath}
\| I - Z_4 \| \leq \| I- F \| \leq \sum_{k=1}^{n-1} \| E \| ^{k} \le \sum_{k=1}^{n-1} (2M)^{k-1} \| E \| \le (n-1)(2M)^{n-2} \| E \|.
\end{displaymath}
By letting $C=(n-1)(2M)^{n-2}$ and  using \eqref{perturbationGP}, we get
\begin{displaymath}
\| I - Z_4 \| \leq C \| E \| = C \| E \tilde I \| =  C\| \tilde{G_2} -  P \| = C\| Z_3^{*}\tilde{G}_1Z_3 - Z_3^{*}PZ_3
\end{displaymath}
\begin{displaymath}
+Z_3^{*}PZ_3 - Z_3^{*}P +Z_3^{*}P -P\|\leq C  ( \|  Z_3^{*}(\tilde{G}_1-P)Z_3 \| + \| Z_3^{*}P(Z_3-I) \|
\end{displaymath}
\begin{displaymath}
 + \| Z_3- I \| \| P \| )\leq C(\| Z_3 \|^2 \| \tilde{G}_1-P \| +( \| Z_3\| +1)\| P \|  \| Z_3-I \|)
 \end{displaymath}
 \begin{displaymath}
  < \big( (1+ \Theta)^2 +(2+\Theta)^2  \big)\Theta \leq \Theta.
\end{displaymath}

Therefore,  we have the last inequality we needed
\begin{equation}\label{eq:result2}
\| I-Z_4 \| \leq \Theta.
\end{equation}


{\bf Step 5 (Stability of $N$).} Finally, we arrived at the desired conclusion for
\begin{displaymath}
(A,H)\overset{N}{\mapsto} (J,P),
\end{displaymath}
where $N=Z_1Z_2Z_3Z_4$.


To sum up, by using Lemma \ref{lemma2}, we see that $N- T_{0}$ is a small perturbation with
$
\| N - T_{0} \|  
\leq \Theta 
$
via \eqref{perturbationZ1T0}, \eqref{boundZ_2}, \eqref{boundZ_3} and \eqref{eq:result2}.
\end{proof}


\section{\bf Real Canonical Bases and Forms}
In  Theorem \ref{thm:focs1}, the real matrices $(A,H)$  were reduced to $(J,P)$, where, generally speaking, $J$ is  not real. Moreover, the affiliation matrix $T$ in $(A,H)\overset{T}{\mapsto}(J,P)$ is  not necessarily  real as well.

The following well-known  result (see, eg.,~\cite{GLR86} and the references therein) describes a purely real relation

\begin{equation}\label{eq:mapsR}
(A,H)\overset{R}{\mapsto}(J_R,P)
\end{equation}
where all five matrices are real.

\begin{proposition}\label{thm:real main}Let ${A} \in\mathbb{R}^ {n \times n}$  be a fixed real $H$-selfadjoint matrix for some self-adjoint real matrix $H$. Then there exists an invertible real matrix $R$ such that $(A,H)\overset{R}{\mapsto}(J_R,P)$. Here $J_R$ is a real Jordan form of $A$, that is 
	\begin{equation}\label{eq:real jordan}
	J_R= J(\lambda_1)\oplus \cdots \oplus J(\lambda_\alpha)\oplus \hat{J}_R(\lambda_{\alpha+1})\oplus \cdots \oplus\hat{J}_R(\lambda_\beta),
	\end{equation}
	where  $J(\lambda_k)$ (for $k=1,\ldots,\alpha$) are real  Jordan blocks  as before and for $k=~\alpha+~1,\ldots,\beta$ we have
		\begin{equation*}
	\hat{J}_R(\lambda_k)=
	{
		\left [ \begin{smallmatrix}
		\sigma_k & \tau_k   & 1        &      0 & 0    & \cdots&      & \cdots&  & &0 \\
		-\tau_k  & \sigma_k & 0        &      1 & 0    &     \ddots  &      &       &  & &  \\ 
		0& 0        & \sigma_k & \tau_k & 1    &\ddots &   \ddots   &       &  & &\vdots\\ 
		0 & 0   &\ddots    &\ddots  &\ddots&\ddots &\ddots&\ddots& &  \\
		&  \ddots        &  \ddots  & \ddots &\ddots  &\ddots&\ddots &\ddots &\ddots & & \vdots\\
		\vdots  &  &\ddots  &\ddots & \ddots&\ddots   & \ddots&\ddots &\ddots & \ddots&\\
		&  &  & \ddots&\ddots & \ddots    & \ddots& \ddots& \ddots & \ddots&  0\\
		&  &  &  &\ddots& \ddots&   0&\sigma_k & \tau_k&1 &0\\
		\vdots &  &  & & &   \ddots &\ddots &-\tau_k & \sigma_k & 0&1\\
		&  &  & & & &    \ddots &0 & 0 &\sigma_k &\tau_k\\[0.6em]
		0 &  & \ldots & &  &\ldots &  & 0& 0 &-\tau_k &\sigma_k\\
		\end{smallmatrix} \right]},
	\end{equation*} where $\lambda_k=\sigma_k+i\tau_k$.
	$P$ is a sip matrix of the form 
	\begin{displaymath}
	P=P_1 \oplus\cdots \oplus P_{\alpha} \oplus {P}_{\alpha+1} \oplus \cdots \oplus{P}_{\beta},
	\end{displaymath}
	where $P_k$ is a sip matrix $\epsilon_k \widetilde{I}$ of the same size as $J({\lambda_k})$ for $ k=1,\ldots,\alpha$ and a sip matrix $\widetilde{I}$ of the same size as  $\hat{J}_R({\lambda_k})$ for $k=\alpha+1,\ldots,\beta$ and $\epsilon_{k}=\pm{1}$ for $k=1,\ldots,\alpha$. 
\end{proposition}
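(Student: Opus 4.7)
The plan is to reduce Proposition~\ref{thm:real main} to Theorem~\ref{thm:focs1} by converting the complex $\gamma$-FOCS basis into a real one through an auxiliary block-diagonal change of variables.

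\textbf{Step 1 (Invoke Theorem~\ref{thm:focs1}).} I would apply Theorem~\ref{thm:focs1} to obtain an invertible $N\in\mathbb{C}^{n\times n}$ with $(A,H)\overset{N}{\mapsto}(J,P)$, partitioned as in \eqref{N} so that $N_k=[L_k \mid \gamma\overline{L}_k]$ for every non-real block $k=\alpha+1,\ldots,\beta$. For a real-eigenvalue block $k\leq\alpha$, since $A,H\in\mathbb{R}^{n\times n}$ and $\lambda_k\in\mathbb{R}$, a standard argument (take real parts in the Jordan-chain defining equations and solve the real flipped-orthogonality system $g_i^{\top}Hg_j=\epsilon_k\delta_{i+j,p_k-1}$ over $\mathbb{R}$) lets us choose $N_k$ entirely in $\mathbb{R}^{n\times n_k}$.

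\textbf{Step 2 (Set up $W$).} I would then seek a complex block-diagonal matrix
\begin{equation*}
W=I\oplus\cdots\oplus I\oplus W_{\alpha+1}\oplus\cdots\oplus W_{\beta},
\end{equation*}
with each $W_k$ ($k>\alpha$) designed so that: (i) $N_kW_k\in\mathbb{R}^{n\times 2m_k}$; (ii) $W_k^{-1}\hat{J}(\lambda_k)W_k=\hat{J}_R(\lambda_k)$; and (iii) $W_k^{*}\widetilde{I}_{2m_k}W_k=\widetilde{I}_{2m_k}$. Granted (i)--(iii), setting $R:=NW$ gives a real matrix, with $R^{-1}AR=W^{-1}JW=J_R$ and $R^{*}HR=W^{*}(N^{*}HN)W=W^{*}PW=P$, which is exactly the content of Proposition~\ref{thm:real main}.

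\textbf{Step 3 (Build $W_k$ from real and imaginary parts).} Write $L_k=[q_1,\ldots,q_{m_k}]$ and set $u_j:=\mathrm{Re}(q_j)$, $v_j:=\mathrm{Im}(q_j)$. Each $u_j$ and $v_j$ lies in $\mathrm{span}(N_k)$ and therefore admits a unique expansion against the columns of $N_k$; these expansions, collected in the interleaved order $(u_1,v_1,\ldots,u_{m_k},v_{m_k})$, form the columns of a candidate $W_k$, which by construction satisfies~(i). Because $A$ is real, $Au_j=\sigma_ku_j-\tau_kv_j+u_{j-1}$ and $Av_j=\tau_ku_j+\sigma_kv_j+v_{j-1}$, so in the interleaved ordering the matrix of $A$ equals $\hat{J}_R(\lambda_k)$, giving~(ii).

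\textbf{Main obstacle -- establishing (iii).} The delicate point is the sip identity $W_k^{*}\widetilde{I}_{2m_k}W_k=\widetilde{I}_{2m_k}$. Expanding the FOCS orthogonality relations $q_i^{*}Hq_j=0$ and $(\gamma\overline{q}_j)^{*}Hq_i=\delta_{i+j,\,m_k+1}$ into real and imaginary parts produces a small linear system for the scalars $u_i^{\top}Hu_j$, $v_i^{\top}Hv_j$, $u_i^{\top}Hv_j$, whose solution depends on $\arg\gamma$. For generic $\gamma$ the resulting Gram matrix of $(u_j,v_j)$ agrees with $\widetilde{I}_{2m_k}$ only up to a real invertible diagonal and a sign pattern; this residual discrepancy is absorbed by post-multiplying $W_k$ by a suitable real diagonal matrix, which leaves (i) and (ii) intact. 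Equivalently, by committing at the outset to $\gamma=-i/2$ and using the ordering $(u_1,-v_1,u_2,-v_2,\ldots)$, a direct computation yields $u_i^{\top}Hu_j=v_i^{\top}Hv_j=0$ and $u_i^{\top}Hv_j=-\delta_{i+j,\,m_k+1}$, making the Gram matrix exactly $\widetilde{I}_{2m_k}$. Either route produces the required $W_k$ and finishes the proof.
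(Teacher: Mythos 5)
Your reduction is, in substance, the route the paper itself takes: Proposition~\ref{thm:real main} is only cited there as classical, but Lemma~\ref{thm:focs-rc} is exactly your Steps 2--3 with $\gamma=i$ --- the fixed block matrix $S$ sends the $i$-FOCS chain $[Q_k\mid i\overline{Q}_k]$ to the interleaved real columns $\sqrt{2}\,\mathrm{Re}(q_j),\sqrt{2}\,\mathrm{Im}(q_j)$, and the identities $S^{-1}\hat{J}S=\hat{J}_R$ and $S^{*}PS=P$ are checked there by direct computation. Combined with Theorem~\ref{thm:focs1} this is precisely your argument, so the overall plan is sound. (Both you and the paper pass quickly over the fact that the chains for real eigenvalues can be taken real; your one-line appeal to the standard real construction is the only other place where detail is thin.)

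However, the first branch of your ``main obstacle'' paragraph does not work. For $\gamma=\rho e^{i\psi}$ the FOCS relations $Q_k^{*}HQ_k=0$ and $\overline{\gamma}\,Q_k^{\top}HQ_k=\widetilde{I}_{m_k}$ give
\begin{equation*}
u_i^{\top}Hu_j=\tfrac{\cos\psi}{2\rho}\,\delta_{i+j,m_k+1},\qquad
v_i^{\top}Hv_j=-\tfrac{\cos\psi}{2\rho}\,\delta_{i+j,m_k+1},\qquad
u_i^{\top}Hv_j=v_i^{\top}Hu_j=\tfrac{\sin\psi}{2\rho}\,\delta_{i+j,m_k+1},
\end{equation*}
so already for $m_k=1$ the Gram matrix is $\tfrac{1}{2\rho}\left[\begin{smallmatrix}\cos\psi&\sin\psi\\ \sin\psi&-\cos\psi\end{smallmatrix}\right]$, and no real diagonal post-multiplication can kill the diagonal entries. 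Thus a purely imaginary $\gamma$ is essential, not a convenience to be ``absorbed.'' Your second branch is the correct one, with one sign slip: the ordering $(u_j,-v_j)$ with $\gamma=-i/2$ yields diagonal $2\times2$ blocks $\left[\begin{smallmatrix}\sigma_k&-\tau_k\\ \tau_k&\sigma_k\end{smallmatrix}\right]$, the transpose of the block in \eqref{eq:real jordan}. Taking $\gamma=i/2$ with the plain ordering $(u_j,v_j)$ (or $\gamma=i$ with the $\sqrt{2}$ normalization, as in Lemma~\ref{thm:focs-rc}) lands exactly on the stated $\hat{J}_R$ and $P$ and completes the proof.
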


\begin{definition}{\bf (Real Canonical Basis (RC))} Suppose that $A \in \mathbb{R}^{n \times n}$ and $J_R$ is the real Jordan form in \eqref{eq:real jordan}. We say that columns of matrix $R$ in \eqref{eq:mapsR} form a real canonical basis of $A$.
\end{definition}

The main result of this section is the Lipschitz stability of RC bases (see Theorem~{\ref{stabilityRC}}). To prove it we need the following lemma that provides a simple, yet useful connection between the {\it{i}}-FOCS and RC bases of the real pair of matrices $(A,H)$. As we will see, for an arbitrary pair $(A,H)$, this relation is carried over with the help of the same fixed matrix $S$.

\begin{lemma}\label{thm:focs-rc} Let $A,H \in \mathbb{R}^{n \times n}$, $A$ is $H$-selfadjoint, and the columns of
	\begin{equation}\label{eq:t}
	T=[T_1, \ldots, T_{\alpha} \lvert T_{\alpha+1}, \ldots, T_{\beta}] \text{  and }
	T_k=[Q_k \lvert i\overline{Q}_k], \quad k=\alpha+1, \ldots, \beta
	\end{equation}
	capture the {\it{i}}-FOCS basis of $(A,H)$. 
	
	Further, let $S=\diag({I_1,\ldots,I_{\alpha} | S_{\alpha+1},\ldots,S_{\beta}})$, where 
	
	\begin{displaymath}
	S_j=\frac{1}{\sqrt{2}}
	\left [ \begin{array}{ccccccccc}
	1&-i&0&0&\cdots& \cdots &0\\
	0&0&1&-i&0&\cdots & 0\\
	\vdots & \ddots   & \ddots& \ddots & \ddots & \ddots &\vdots\\
	\vdots & {}   & \ddots& \ddots & -i& 0 &0\\
	0&\cdots&\cdots& 0&0&1&-i\\
	-i&1&0&0&\cdots& \cdots &0\\
	0&0&-i&1&0&\cdots & 0\\
	\vdots & \ddots   & \ddots& \ddots & \ddots & \ddots &\vdots\\
	\vdots & {} & \ddots& \ddots & 1& 0 &0\\
	0& \cdots&\cdots& 0&0&-i&1\\
	\end{array} \right]\\.
	\end{displaymath}Then the matrix 
	
	\begin{equation}\label{eq:R}
	R=TS
	\end{equation}is real and its columns  
	
	\begin{equation}\label{eq:r}
	R=[T_1, \ldots, T_{\alpha} \lvert K_{\alpha+1}, \ldots, K_{\beta}]
	\end{equation}
	capture the RC basis of $A$.  
	Moreover,  $K_{j}=T_{j}S_j$ for $j=\alpha+1, \ldots, \beta$. 
	
	Conversely, let the columns of  $R$  capture the RC basis of $A$. Then the columns of $T=RS^{-1}$ capture the {\it{i}}-FOCS basis of $A$. Moreover, if we partition $T$ as in \eqref{eq:t}, then $T_j=R_jS_j^{-1}$. Finally, here is the explicit formula for $S_j^{-1}$:

	\begin{displaymath}
	S_j^{-1}=\frac{1}{\sqrt{2}}
	\left [ \begin{array}{ccccccccccccc}
	1&0&\cdots &0&i&0 &\cdots&0\\
	i&0&{}&\vdots&1&0&{}&\vdots\\
	0&1&\ddots&0 &0&i&\ddots&0\\
	0&i&\ddots&\vdots&  0 &1&\ddots&\vdots \\
	\vdots&\ddots &\ddots&0&\vdots&\ddots &\ddots&0\\
	\vdots&\ddots &\ddots&1&\vdots&\ddots &\ddots&i\\
	0&\cdots & 0&i&0&\cdots&0 & 1\\
	\end{array} \right].
	\end{displaymath}
	
\end{lemma}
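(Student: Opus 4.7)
The plan is to reduce the four claims of the lemma to independent block-level statements. Since both $S$ and $S^{-1}$ are block diagonal with identity blocks on the real-eigenvalue part, everything immediately collapses to the complex-block case $k\in\{\alpha+1,\dots,\beta\}$. Fix such a $k$, let $m=m_k$ denote the common size of $J(\lambda_k)$ and $J(\overline\lambda_k)$, and write $T_k=[q_1,\dots,q_m\mid i\bar q_1,\dots,i\bar q_m]$. The cleanest organising device is the factorisation
$$S_k = \Pi_k\,(I_m\otimes U_0), \qquad U_0 = \frac{1}{\sqrt 2}\begin{bmatrix}1 & -i\\ -i & 1\end{bmatrix},$$
where $\Pi_k$ is the row permutation that de-interleaves the rows of $I_m\otimes U_0$ (reading them in the order $1,3,5,\dots,2m-1,2,4,\dots,2m$); this factorisation can be read off directly from the entries of $S_k$.

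For the forward direction I would verify three things. First, a column-by-column computation shows that columns $2i-1$ and $2i$ of $R_k:=T_kS_k$ equal $\sqrt2\,\mathrm{Re}\,q_i$ and $\sqrt2\,\mathrm{Im}\,q_i$ respectively, hence $R_k$ is real. Second, writing $\lambda_k=\sigma_k+i\tau_k$ and applying $Aq_i=\lambda_kq_i+q_{i-1}$ to the real and imaginary parts gives
$$A\,\mathrm{Re}\,q_i = \sigma_k\,\mathrm{Re}\,q_i - \tau_k\,\mathrm{Im}\,q_i + \mathrm{Re}\,q_{i-1}, \quad A\,\mathrm{Im}\,q_i = \tau_k\,\mathrm{Re}\,q_i + \sigma_k\,\mathrm{Im}\,q_i + \mathrm{Im}\,q_{i-1},$$
which is precisely the column-wise action of $\hat J_R(\lambda_k)$, giving $AR_k=R_k\hat J_R(\lambda_k)$. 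Third, since $T_k^*HT_k=\widetilde I_{2m}$, it suffices to prove $S_k^*\widetilde I_{2m}S_k=\widetilde I_{2m}$. Using $\widetilde I_{2m}=\bigl[\begin{smallmatrix}0 & \widetilde I_m\\ \widetilde I_m & 0\end{smallmatrix}\bigr]$ together with the factorisation, a short check that $\Pi_k$ commutes with the anti-identity (i.e.\ $\Pi_k^T\widetilde I_{2m}\Pi_k=\widetilde I_{2m}$) reduces the problem to the single $2\times 2$ identity $U_0^*\widetilde I_2U_0=\widetilde I_2$, which is immediate.

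For the converse, the stated closed-form expression for $S_j^{-1}$ follows from $S_k^{-1}=(I_m\otimes U_0^*)\Pi_k^T$ (using that $U_0$ is unitary), or alternatively from a direct verification that $S_jS_j^{-1}=I$ entry by entry. Given an RC matrix $R$, setting $T:=RS^{-1}$ yields $T^{-1}AT=SJ_RS^{-1}=J$ and $T^*HT=(S^{-1})^*PS^{-1}=P$ by rearranging the forward identities. To confirm the $i$-FOCS shape of $T_k=R_kS_k^{-1}$, I would read its columns: for each $\ell=1,\dots,m$ column $\ell$ is $\tfrac{1}{\sqrt 2}(r_{2\ell-1}+ir_{2\ell})=:q_\ell$ and column $m+\ell$ is $\tfrac{1}{\sqrt 2}(ir_{2\ell-1}+r_{2\ell})$; since the $r_j$ are real, the latter equals $i\bar q_\ell$, giving $T_k=[Q_k\mid i\overline Q_k]$.

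The principal technical obstacle is the index bookkeeping arising from the mismatch between the half-block ordering of $T_k$ and the interleaved ordering of $R_k$; introducing the pair $(\Pi_k,U_0)$ at the outset absorbs this bookkeeping into one small permutation and one $2\times 2$ unitary, after which every individual verification is either a one-line calculation or a routine manipulation of affiliation identities.
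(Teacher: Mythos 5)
Your proposal is correct, and its logical skeleton matches the paper's: both arguments reduce the lemma to the single block-level affiliation $(J,P)\overset{S}{\mapsto}(J_R,P)$ for one conjugate pair of Jordan blocks, together with a column-by-column computation showing that $T_kS_k$ is real (respectively, that $R_kS_k^{-1}$ has the shape $[Q_k\,|\,i\overline{Q}_k]$). Where you genuinely diverge is in how the two matrix identities are verified. The paper checks $\hat{J}S=S\hat{J}_R$ and $PS^{*}PS=I$ by writing out the full $2m\times 2m$ products explicitly, whereas you factor $S_k=\Pi_k(I_m\otimes U_0)$ with $U_0=\frac{1}{\sqrt{2}}\left[\begin{smallmatrix}1&-i\\-i&1\end{smallmatrix}\right]$, note that the de-interleaving permutation $\Pi_k$ preserves the antidiagonal pairing $c+d=2m+1$ (so $\Pi_k^{T}\widetilde{I}_{2m}\Pi_k=\widetilde{I}_{2m}$), and collapse $S_k^{*}\widetilde{I}_{2m}S_k=\widetilde{I}_{2m}$ to the one-line identity $U_0^{*}\widetilde{I}_2U_0=\widetilde{I}_2$; likewise you obtain $AR_k=R_k\hat{J}_R(\lambda_k)$ by taking real and imaginary parts of the chain relations $Aq_i=\lambda_k q_i+q_{i-1}$ rather than by multiplying out $\hat{J}S$. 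Your route buys a verification independent of the block size and makes the unitarity of $S_k$ (hence the closed form of $S_j^{-1}$) transparent; the paper's brute-force computation avoids the permutation bookkeeping and, more importantly, establishes $JS=SJ_R$ as a standalone matrix identity. That identity is exactly what your converse step $T^{-1}AT=SJ_RS^{-1}=J$ consumes, and in your write-up it is only extracted indirectly from the forward direction via the existence of some $i$-FOCS pair (guaranteed by Theorem~\ref{thm:focs1}); it would be cleaner to record $JS=SJ_R$ as a direct (and trivial, given your factorization) computation so the converse does not lean on an existence theorem. All the individual calculations you outline check out.
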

\begin{proof}
	First, let us show that the columns of  matrix $R$ in \eqref{eq:R} capture the RC basis of $A$, that is
		\begin{displaymath}
	(A,H)\overset{R}{\mapsto}(J_R,P).
	\end{displaymath}  
	Since $(A,H)\overset{T}{\mapsto}(J,P)$, the above relation follows from
		\begin{equation}\label{eq:mapsS}
	({J},P)\overset{S}{\mapsto}({J}_R,P).
	\end{equation}
	
	We will now prove \eqref{eq:mapsS}. Without loss of generality we can consider just a single Jordan block, assuming that $\sigma(A)=\big\{\lambda, \overline{\lambda}\big\}$.
	The first relation $S^{-1}\hat{J}S=\hat{J}_R$ in \eqref{eq:mapsS} follows from		
	\begin{displaymath}\hat{J}S=\frac{1}{\sqrt{2}}
	{
		\left [ \begin{smallmatrix}
		\sigma+i\tau&-i\sigma+\tau&1 & -i &0&\dots&0&0\\
		0&0&\sigma+i\tau&-i\sigma+\tau&0&\dots&0 & 0 \\
		0&0& 0 & 0&1&\dots &0&0 \\
			0&0&0& 0 &\sigma+i\tau&\dots&0&0\\
			\vdots&\vdots&\vdots&\vdots &\vdots&\ddots&\vdots&\vdots\\
			0&0&0&0&0&\dots&1 & -i \\
			0&0& 0 & 0&0&\dots &\sigma+i\tau&-i\sigma+\tau \\
		-i\sigma-\tau&\sigma-i\tau&-i& 1 &0&\dots&0&0\\
		0&0&-i\sigma-\tau&\sigma-i\tau&0&\dots&0& 0 \\
		\vdots&\vdots&\vdots&\vdots &\vdots&\ddots&\vdots&\vdots\\
		0&0&0&0&0&\dots&-i & 1 \\
		0&0& 0 & 0 &0&\dots&-i\sigma-\tau&\sigma-i\tau \\
		\end{smallmatrix} \right]}=S\hat{J}_R.
	\end{displaymath}
	The second relation $S^{\ast}PS=P$ in  \eqref{eq:mapsS} follows from
	\begin{displaymath}
	S^{*}PS=P \Rightarrow PS^{*}PS=PP=I.
	\end{displaymath}
	Thus, we just need to show that $PS^{*}PS=I$.
	\begin{displaymath}PS^{*}PS=\frac{1}{2}\left [ \begin{smallmatrix}
	&&&&&&&\\
	0&0&\dots&0&i&0&0&\dots&1\\
	0&0&\dots&0&1&0&0&\dots&i\\
	0&0&\dots&i&0&0&0&\dots&0\\
	0&0&\dots&1&0&0&0&\dots&0\\
	\vdots&\vdots&\iddots&\vdots&\vdots&\vdots&\vdots&\iddots&\vdots\\
	0&i&\dots&0&0&0&1&\dots&0\\
	0&1&\dots&0&0&0&i&\dots&0\\
	i&0&\dots&0&0&1&0&\dots&0\\
	1&0&\dots&0&0&i&0&\dots&0\\
	&&&&&&&\\
	\end{smallmatrix} \right]\\
	\left [ \begin{smallmatrix}
	0&0&0&0&\dots&-i&1\\
	\vdots&\vdots&\vdots&\vdots&\iddots&\vdots&\vdots\\
	0&0&-i&1&\dots&0&0\\
	-i&1&0&0&\dots&0&0\\
	0&0&0&0&\dots&1&-i\\
	\vdots&\vdots&\vdots&\vdots&\iddots&\vdots&\vdots\\
	0&0&1&-i&\dots&0&0\\
	1&-i&0&0&\dots&0&0\\
	\end{smallmatrix} \right]\\=I.
	\end{displaymath}
	Secondly, we show that $R$ is real. We have
	\begin{displaymath}
	TS=[T_1, \ldots, T_{\alpha} | T_{\alpha+1}S_{\alpha+1}, \ldots, T_{\beta}S_{\beta}],
	\end{displaymath}
	where $T_k$ (for $k=1,\ldots, \alpha$) are real. Now, for $k=\alpha+1,\ldots, \beta$ we have
	\begin{displaymath}T_kS_k=[Q_k \lvert i\overline{Q}_k]S_k=
	[t_1, \ldots, t_{p_k} \lvert i\overline{t}_{1}, \ldots, i\overline{t}_{p_k} ]S_k= [t_1+\overline{t}_1, i(\overline{t}_1-t_1),t_2
	\end{displaymath}
	\begin{equation*}
	+\overline{t_2}, i(\overline{t}_2-t_2) , \ldots , t_{p_k}+\overline{t}_{p_k}, i(\overline{t}_{p_k} - t_{p_k})],
	\end{equation*}
	which is real as well. Thus, we have 
	\begin{displaymath}
	(A,H)\overset{R}{\mapsto}(J_{R},P).
	\end{displaymath}
	The converse follows from the relation $(J_R,P)\overset{S^{-1}}{\mapsto}(J,P)$, using similar argument.
	
\end{proof}

Now we are ready to prove the main result of this section.


\begin{theorem}{\bf (Stability of RC basis)}\label{stabilityRC} Let $A_0 \in \mathbb{R}^{n \times n}$ be a fixed $H_0$-selfad\-joint matrix where $H_0^{T}=H_0 \in \mathbb{R}^{n \times n}$ is an invertible matrix. Let
	\begin{displaymath}
	(A_0, H_0)\overset{R_0}{\mapsto}(J_{R},P)
	\end{displaymath}
	be the mapping from Proposition \ref{thm:real main}, that is the columns of $R_0$ capture the $\it{RC}$ basis of $A_0$.
	There exist constants $K,\delta>0$ (depending on $A_0$ and $H_0$ only) such that the following assertion holds. For any real pair $(A,H)$ such that $A$ is $H$-selfadjoint and $A$ is similar to $A_0$ and
		\begin{equation*}
	\| A-A_0 \| + \| H-H_0 \| < \delta.
	\end{equation*}
	Then, there exists a matrix $R$ such that
	\begin{displaymath}
	(A, H)\overset{R}{\mapsto}(J_{R},P),
	\end{displaymath}whose columns capture the RC basis of A, and such that \begin{equation}\label{lbperwithconstant}
	\| R-R_0 \| \leq {K}(\| A-A_0 \| + \| H-H_0 \|).
	\end{equation}
	\end{theorem}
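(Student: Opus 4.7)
The strategy is to reduce the stability statement for RC bases to the already-established Lipschitz stability of $i$-FOCS bases (Theorem~\ref{thm:main2}) by using the fixed, explicit change-of-basis matrix $S$ from Lemma~\ref{thm:focs-rc}, which interchanges the RC and $i$-FOCS bases. Since $S$ depends only on the Jordan structure of $A_0$ (and not on the individual matrices), multiplication by $S$ or $S^{-1}$ is an isometry-type operation up to a fixed constant, so a Lipschitz bound on one side carries over to the other with only a change of the constant $K$.

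First, I would pass from the given RC basis matrix $R_0$ of $(A_0,H_0)$ to the corresponding $i$-FOCS basis matrix by setting
\begin{displaymath}
T_0 := R_0 S^{-1}.
\end{displaymath}
By the converse part of Lemma~\ref{thm:focs-rc}, the columns of $T_0$ capture the $i$-FOCS basis of $(A_0,H_0)$, i.e.\ $(A_0,H_0)\overset{T_0}{\mapsto}(J,P)$, where $J$ is the (complex) Jordan form corresponding to $J_R$. Next, I would apply Theorem~\ref{thm:main2} with $\gamma=i$: shrinking $\delta$ if necessary so that the hypotheses of Theorem~\ref{thm:main2} hold, this produces an invertible matrix $N\in\mathbb{R}^{n\times n}$ whose columns capture the $i$-FOCS basis of $(A,H)$, so that $(A,H)\overset{N}{\mapsto}(J,P)$, together with a Lipschitz estimate
\begin{displaymath}
\|N-T_0\|\leq K_1\bigl(\|A-A_0\|+\|H-H_0\|\bigr)
\end{displaymath}
for some constant $K_1$ depending only on $A_0$ and $H_0$.

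Finally, I would set
\begin{displaymath}
R := N S.
\end{displaymath}
By the forward direction of Lemma~\ref{thm:focs-rc}, $R$ is real and its columns capture the RC basis of $(A,H)$, giving $(A,H)\overset{R}{\mapsto}(J_R,P)$ as required. For the norm bound, using $R_0 = T_0 S$, we simply factor out $S$:
\begin{displaymath}
\|R-R_0\|=\|NS-T_0 S\|=\|(N-T_0)S\|\leq \|S\|\cdot\|N-T_0\|\leq K_1\|S\|\bigl(\|A-A_0\|+\|H-H_0\|\bigr),
\end{displaymath}
so \eqref{lbperwithconstant} holds with $K:=K_1\|S\|$, which depends only on $A_0$ and $H_0$ since $S$ is determined by the Jordan structure of $A_0$.

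The only real issue worth double-checking is the match of hypotheses between Theorems~\ref{thm:main2} and~\ref{stabilityRC}: the former is stated for $A$ similar to $A_0$ (the Jordan structure is preserved) and real $(A,H)$, which are exactly the present standing assumptions. An auxiliary point is that Lemma~\ref{thm:focs-rc} as stated applies block-by-block and $S$ is block-diagonal of fixed size, so $\|S\|$ is an absolute constant determined by the block sizes in $J_R$; hence the constant $K$ in the final bound is genuinely independent of $(A,H)$. No further nontrivial estimate is needed, since all the analytic work has been absorbed into Theorem~\ref{thm:main2}.
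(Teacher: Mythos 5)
Your proposal is correct and follows essentially the same route as the paper's proof: pass from $R_0$ to the $i$-FOCS basis via $T_0=R_0S^{-1}$ (Lemma~\ref{thm:focs-rc}), invoke Theorem~\ref{thm:main2} to get a stable $i$-FOCS basis for $(A,H)$, and return to the RC basis by multiplying by the fixed matrix $S$, absorbing $\|S\|$ into the constant. No gaps.
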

\begin{proof}
Here is the diagram of our proof.

\begin{center}
\definecolor{aqaqaq}{rgb}{0.6274509803921569,0.6274509803921569,0.6274509803921569}
\begin{tikzpicture}[line cap=round,line join=round,x=1.5cm,y=1.5cm]
\clip(-6,0.7) rectangle (-1.2,3.5);
\draw (-4.1,3.4) node[anchor=north west] {$(A_0,H_0)$};
\draw (-4,1.35) node[anchor=north west] {$(A,H)$};
\draw (-5.95,2.35) node[anchor=north west] {$(J_R,P)$};
\draw (-2.1,2.35) node[anchor=north west] {$(J,P)$};
\draw [->,line width=1pt] (-4,3) -- (-5,2.2);
\draw [->,line width=1pt] (-3,3) -- (-2,2.2);
\draw [->,line width=1pt] (-4,1.2) -- (-5,2);
\draw [->,line width=1pt] (-3,1.2) -- (-2,2);
\draw [->,line width=1pt] (-4.6,2.2) -- (-2.4036363636363642,2.1981818181818187);
\draw [->,line width=1pt] (-2.4,2) -- (-4.6,2);
\draw [<-,shift={(-2.067706863648725,2.2583274232818553)},line width=1pt,color=aqaqaq]  plot[domain=-1.7076660690284706:0.5904674383413426,variable=\t]({1*0.8089711881931181*cos(\t r)+0*0.8089711881931181*sin(\t r)},{0*0.8089711881931181*cos(\t r)+1*0.8089711881931181*sin(\t r)});
\draw (-2.7,3.05) node[anchor=north west]  {$T_0=R_0S^{-1}$};
\draw (-3.7,2.02) node[anchor=north west] {$S$};
\draw (-3.7,2.62) node[anchor=north west] {$S^{-1}$};
\draw (-2.5,1.7) node[anchor=north west] {$T$};
\draw (-5.7,1.7) node[anchor=north west] {$R=TS$};
\draw (-4.8,3.05) node[anchor=north west] {$R_0$};
\draw [shift={(-3.6125,2.70625)},line width=1pt,color=aqaqaq]  plot[domain=1.86:4.75,variable=\t]({0.01+1*0.3065049958809803*cos(\t r)+0*0.3065049958809803*sin(\t r)},{-0.0020+0*0.3*cos(\t r)+1*0.15*sin(\t r)+0.0463});
\draw [->,line width=1pt,color=aqaqaq] (-3.6,2.6) -- (-2.9,2.6);
\draw [shift={(-3.3875,1.49375)},line width=1pt,color=aqaqaq]  plot[domain=-1.28:1.6,variable=\t]({-0.02+1*0.3065049958809809*cos(\t r)+0*0.3065049958809809*sin(\t r)},{0.007+0*0.3*cos(\t r)+1*0.15*sin(\t r)});
\draw [->,line width=1pt,color=aqaqaq] (-3.4,1.65) -- (-4.1,1.65);
\end{tikzpicture}
\end{center}

Let the columns of $R_0$ capture the RC basis of $A_0$, i.e. 
	\begin{equation*}\label{eq:mapR_0}
	(A_0, H_0)\overset{R_0}{\mapsto}(J_{R},P). 
	\end{equation*} 
	Then, by Lemma {\ref{thm:focs-rc}} columns of
		\begin{equation}\label{eq:s^{-1}}
	T_0=R_0S^{-1}
	\end{equation}
	capture the columns of {\it{i}}-FOCS basis of $A_0$, that is $(A_0, H_0)\overset{T_0}{\mapsto}(J,P)$.
	
	Theorem~\ref{thm:main2} says that there exists a matrix $T$, capturing the {\it{i}}-FOCS basis of $A$, such that $(A, H)\overset{T}{\mapsto}(J,P)$ and for some $\hat{K}$
	\begin{equation}\label{eq:bound}
	\| T-T_0 \|  \leq\hat{K}( \| A- A_{0} \| + \| H- H_{0} \| ). 
	\end{equation}
	Let us define 
$	R=TS$.
	Then, 
	\begin{equation}
	(A, H)\overset{R}{\mapsto}(J_{R},P). 
	\end{equation}
	To obtain \eqref{lbperwithconstant} we observe that
	\begin{displaymath}
	\| R-R_0 \| = \| TS -T_0S \| \leq \| S \| \| T-T_0 \| \leq {K}( \| A- A_{0} \| + \| H- H_{0} \| ) 
	\end{displaymath} 
	follows from \eqref{eq:s^{-1}}, \eqref{eq:bound} and \eqref{eq:r} by denoting ${K}=\| S\|\hat {K}$. Thus, we complete the proof of the Theorem {\ref{stabilityRC}}.
\end{proof}


\section{\bf {Weak Affiliation Case} }

In previous sections, we considered a strong affiliation relation $(A,H)\overset{T}{\mapsto}(J,P)$ where matrices $A$ and $J$ were similar. Here we weaken this condition and only require  $A$ and $J$ to have the same Jordan structure. Here is a formal definition. 
		
\begin{definition}
		{\bf{(Same Jordan Structure)}} Denote by $\sigma{(A_{0})}$ and $\sigma{(A)}$ the sets of all eigenvalues of $A_{0}$ and $A$, respectively. Matrices $A_{0}$ and $A$ are said to have the {\bf same Jordan structure} if there is a bijection  $f:\sigma{(A_{0})} \rightarrow \sigma {(A)} $ such that if 
		\begin{displaymath}
		\mu=f(\lambda) 
		\end{displaymath}
		then $\lambda\in\sigma(A_0)$ and $\mu\in\sigma(A)$ have the same Jordan block sizes or, equivalently, if there is  an invertible matrix \(S\) such that $A_0$  and $S^{-1}AS$ have the same Jordan bases.
\end{definition}

We presented the weak affiliation in Section 1. Let $A_0$ be $H_0$-selfadjoint. To remind the reader, a matrix $S$ is called a {\bf weak affiliation matrix} of the quadruple $(A_0, H_0, A, H)$ if
	\begin{displaymath}
	(A,H)\overset{S}{\mapsto}_{w}(A_0,H_0),
	\end{displaymath}where matrices $A_0$ and $A$ have the same Jordan structure. In this case $(A_0,H_0)$ and $(A,H)$ are called {\bf weakly affiliated}. 

In \cite{BOP}  authors proved not only Proposition {\ref{thm:stabilityFO}}, but also its ''weak'' version, which we formulate next.


\begin{proposition}\label{thm:stabilityweak} Let $ A_0 \in \mathbb{C}^{ n\times n}$ be a fixed $H_0$-selfadjoint matrix and let columns of this matrix capture the FO-basis, that is 
	\begin{displaymath}
	(A_0,H_0)\overset{T_0}{\mapsto}(J,P).
	\end{displaymath}
	There exist constants $K,\delta > 0$ (depending on $ A_{0}$ and $ H_0 $ only) such that  the following assertion holds. For any $H$-selfadjoint matrix $A$  that  has the same Jordan structure as $A_0$ and
	\begin{displaymath}
	\| A-A_0\| + \| H-H_0\| < \delta,
	\end{displaymath}
	there exists a matrix $T$ whose columns capture the FO-basis of $A$, i.e. 
	\begin{displaymath}
	(A,H)\overset{T}{\mapsto}_{w}(J,P),
	\end{displaymath}and such that 
	\begin{equation*}\label{TT0weak}
	\| T - T_{0} \| \leq K(\| A-A_0 \| + \| H-H_0 \|).
	\end{equation*}
\end{proposition}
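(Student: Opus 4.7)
The plan is to reduce the weak-affiliation statement to the strong Proposition~\ref{thm:stabilityFO} by replacing the reference matrix $A_0$ with an auxiliary matrix that has the same spectrum as $A$. First I would pick the Jordan form $J_A$ of $A$ whose blocks match those of $J$ in sizes and order via the bijection $f\colon\sigma(A_0)\to\sigma(A)$ coming from the same-Jordan-structure hypothesis, and set $\tilde{A}_0 := T_0 J_A T_0^{-1}$. Since $J_A$ sits in the canonical block form used in Proposition~\ref{thm:main1}, it is $P$-selfadjoint (that is, $J_A^{*}P=PJ_A$, as one checks directly against the sip matrix $P$, block by block, for both real and paired-conjugate eigenvalues); combined with $T_0^{*}H_0T_0=P$ this yields $\tilde{A}_0^{*}H_0=H_0\tilde{A}_0$, so $\tilde{A}_0$ is $H_0$-selfadjoint and $(\tilde{A}_0,H_0)\overset{T_0}{\mapsto}(J_A,P)$. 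Hence $T_0$ captures an FO basis of $(\tilde{A}_0,H_0)$, and $A$ is similar to $\tilde{A}_0$ since both have Jordan form $J_A$.

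Next I would control the distance $\|A-\tilde{A}_0\|$. For each eigenvalue $\lambda_k\in\sigma(A_0)$ with corresponding block(s) of total dimension $m_k$, pick a small contour $\Gamma_k$ around $\lambda_k$ that excludes the remainder of $\sigma(A_0)$; once $\delta$ is small, $\Gamma_k$ still encloses exactly the matching eigenvalue $\mu_k=f(\lambda_k)$ of $A$, and
\begin{equation*}
\mu_k=\tfrac{1}{m_k}\trace\!\left(A\cdot\tfrac{1}{2\pi i}\oint_{\Gamma_k}(zI-A)^{-1}\,dz\right)
\end{equation*}
is an analytic, hence Lipschitz, function of the entries of $A$. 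This gives $\|J_A-J\|\le C\|A-A_0\|$ and therefore $\|A-\tilde{A}_0\|\le\|A-A_0\|+\|T_0\|\|T_0^{-1}\|\|J-J_A\|\le C'\|A-A_0\|$. Now I would apply Proposition~\ref{thm:stabilityFO} with reference pair $(\tilde{A}_0,H_0)$, FO basis $T_0$, and perturbation $(A,H)$ (admissible because $A$ is similar to $\tilde{A}_0$), obtaining $T$ with $(A,H)\overset{T}{\mapsto}(J_A,P)$ and $\|T-T_0\|\le K_1\bigl(\|A-\tilde{A}_0\|+\|H-H_0\|\bigr)$, which together with the previous bound delivers $\|T-T_0\|\le K(\|A-A_0\|+\|H-H_0\|)$. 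Finally, because $J_A$ and $J$ have identical block sizes and ordering, they share the standard Jordan basis; so $T^{-1}AT=J_A$ and $J$ have the same Jordan basis, which is exactly $(A,H)\overset{T}{\mapsto}_{w}(J,P)$.

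The main obstacle is ensuring that the constant $K_1$ from Proposition~\ref{thm:stabilityFO} can be taken uniformly, since the reference pair $(\tilde{A}_0,H_0)$ itself depends on $A$. The resolution is that $\tilde{A}_0$ stays within a $C'\delta$-neighborhood of $A_0$, and the constant produced by the proof of Proposition~\ref{thm:stabilityFO} depends continuously on the entries of the reference matrix; shrinking $\delta$ once at the outset lets us upper-bound $K_1$ uniformly by a quantity depending only on $(A_0,H_0)$. A routine but essential side point is to check that the hypothesized bijection $f$ agrees with the nearest-eigenvalue pairing used to define $J_A$; this is automatic once $\delta$ is taken smaller than half the minimum gap between distinct eigenvalues of $A_0$.
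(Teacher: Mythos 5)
The paper does not actually prove this proposition: it is quoted verbatim from \cite{BOP} (where the weak version is established directly, alongside the strong one), so there is no in-paper argument to match yours against. Your proposal is instead a reduction of the weak statement to the strong Proposition~\ref{thm:stabilityFO} via a moving reference pair $(\tilde{A}_0,H_0)$ with $\tilde{A}_0=T_0J_AT_0^{-1}$. The preparatory steps are essentially sound: $J_A$ is $P$-selfadjoint provided the natural (nearest-eigenvalue) pairing respects realness and block sizes, and both facts do follow for small $\delta$ from the eigenvalue count forced by the bijection $f$, the symmetry of $\sigma(A)$ about $\mathbb{R}$, and the lower semicontinuity of $\rank\bigl((A-\mu_k I)^{j}P_k\bigr)$ combined with the equality of the two multisets of partitions; you assert rather than prove this last point, but it is indeed routine. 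The trace/Riesz-projection bound $\|J_A-J\|\le C\|A-A_0\|$ is correct.

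The genuine gap is the uniformity of the constants. Proposition~\ref{thm:stabilityFO}, as stated, only asserts the existence of $K$ and $\delta$ \emph{for a fixed reference pair}; in your argument the reference pair $(\tilde{A}_0,H_0)$ varies with $A$, so you need $K_1=K(\tilde{A}_0,H_0)$ bounded above and $\delta(\tilde{A}_0,H_0)$ bounded below uniformly over a neighborhood of $A_0$. Your resolution --- that ``the constant produced by the proof of Proposition~\ref{thm:stabilityFO} depends continuously on the entries of the reference matrix'' --- is a claim about the internals of a proof you are treating as a black box, and it cannot be extracted from the statement alone (nothing in the statement rules out $K\to\infty$ or $\delta\to 0$ as the reference matrix moves, and no semicontinuity is available to invoke compactness of the family of eigenvalue tuples). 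This is precisely the obstruction that makes the weak version a separate theorem in \cite{BOP} rather than a corollary of the strong one. To close the gap you would have to either re-derive the strong proposition with explicit, locally uniform control of $K$ and $\delta$ in the reference data, or abandon the reduction and prove the weak statement directly; as written, the argument is incomplete at its load-bearing step.
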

Now, we are going to  introduce the  ''weak'' versions of Theorem {\ref{thm:main2}} and  Theorem~{\ref{stabilityRC}}.


\begin{theorem}\label{weakstability}{\bf{(Weak Stability of $\gamma$-FOCS basis)}} Let $ A_0 \in \mathbb{R}^{ n\times n}$ be a fixed $H_0$-selfadjoint, where $H_0\in \mathbb{R}^{n \times n}$ is a Hermitian matrix, and let the columns of this matrix capture the FOCS-basis, that is 
	\begin{displaymath}
	(A_0,H_0)\overset{T_0}{\mapsto}(J,P).
	\end{displaymath}
	Then, there exist constants $K,\delta > 0\ (depending\ on \ A_{0}\ and \ H_0 \ only)$ such that  the following assertion holds. For any $H$-selfadjoint matrix $A$ such that $A$,~$H \in~\mathbb{R}^{n \times n}$, invertible $A$ has the same Jordan structure as $A_0$, and
	\begin{displaymath}
	\| A-A_0\| + \| H-H_0\| < \delta,
	\end{displaymath}
	there exists an invertible matrix $N \in \mathbb{R}^{n \times n}$ whose columns capture the FOCS-basis of $A$, that is  
	\begin{displaymath}
	(A,H)\overset{N}{\mapsto}_{w}(J,P),
	\end{displaymath}and such that 
	\begin{equation*}
	\| N - T_{0} \| \leq K(\| A-A_0 \| + \| H-H_0 \|).
	\end{equation*} 
\end{theorem}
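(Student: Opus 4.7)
The plan is to mimic the proof of Theorem~\ref{thm:main2} almost line by line. Looking at that argument, the similarity hypothesis ($A$ similar to $A_0$) enters at exactly one point: at the very beginning of ``Stability of Step~1'', where Proposition~\ref{thm:stabilityFO} is invoked to produce a matrix $T$ with $(A,H)\overset{T}{\mapsto}(J,P)$ and $\|T-T_0\|\leq\Theta$. Every subsequent step is an algebraic manipulation on $Z_1,Z_2,Z_3,Z_4$ together with norm estimates that depend only on $\|\widetilde{G}-P\|$ and on the entries of $\widetilde{G}=Z_1^{*}HZ_1$; none of those steps uses similarity.

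My plan is therefore the following. First, invoke Proposition~\ref{thm:stabilityweak} in place of Proposition~\ref{thm:stabilityFO} to obtain a matrix $T$ with $(A,H)\overset{T}{\mapsto}_w(J,P)$ and $\|T-T_0\|\leq\Theta$. Then repeat verbatim the construction of $Z_1$ from \eqref{def:T_1}--\eqref{def:T_k}: symmetrize the non-real blocks by replacing $T_k=[L_k\mid S_k]$ with $M_k=[L_k\mid \gamma\overline{L_k}]$. The estimate $\|Z_1-T_0\|\leq\Theta$ remains valid because it was derived purely from the entrywise bound on $T-T_0$ together with the fact that $T_0$ is already $\gamma$-CS, and consequently the bound $\|\widetilde{G}-P\|\leq\Theta$ from \eqref{GP} carries over unchanged.

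Next, I would construct $Z_2,Z_3,Z_4$ using the identical formulas from Steps~2--4 of the proof of Theorem~\ref{thm:focs1}. By design these three matrices are block-diagonal and satisfy $Z_k^{-1}JZ_k=J$ for $k=2,3,4$, so each of them realizes a \emph{strong} affiliation of $(J,\cdot)$ with $(J,\cdot)$. Composing, the chain
\begin{displaymath}
(A,H)\overset{Z_1}{\mapsto}_w(J,\widetilde{G})\overset{Z_2}{\mapsto}(J,\widetilde{G}_1)\overset{Z_3}{\mapsto}(J,\widetilde{G}_2)\overset{Z_4}{\mapsto}(J,P)
\end{displaymath}
produces a weak affiliation $(A,H)\overset{N}{\mapsto}_w(J,P)$ with $N=Z_1Z_2Z_3Z_4$. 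The bounds $\|I-Z_k\|\leq\Theta$ for $k=2,3,4$ follow from exactly the same computations as in Steps~2--4 of the proof of Theorem~\ref{thm:main2}, since those arguments use only the estimate $\|\widetilde{G}-P\|\leq\Theta$ and entrywise properties of $\widetilde{G}$. Finally, Lemma~\ref{lemma2} delivers $\|N-T_0\|\leq\Theta$, which is the desired bound.

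The only genuinely new conceptual point is transitivity of the weak affiliation along a chain whose first step is weak and subsequent steps are strong; this is immediate from Definition~\ref{waffil}, since the Jordan basis of $T^{-1}AT$ produced by the weak step is preserved under further conjugation by matrices commuting with $J$. Hence no real obstacle arises, and the weak version is essentially a cosmetic modification of the strong one, with the single substitution of Proposition~\ref{thm:stabilityweak} for Proposition~\ref{thm:stabilityFO} at the point where similarity was previously used.
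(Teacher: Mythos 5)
Your proposal is correct and coincides with the paper's own argument: the paper proves Theorem~\ref{weakstability} precisely by substituting Proposition~\ref{thm:stabilityweak} for Proposition~\ref{thm:stabilityFO} at the start of the proof of Theorem~\ref{thm:main2} and keeping everything else unchanged. Your extra remark on composing the initial weak affiliation with the subsequent strong ones is a detail the paper leaves implicit, but it does not change the route.
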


\begin{theorem}{\bf{(Weak Stability of  RC basis)}}\label{weakstabilityRC} Let $A_0 \in \mathbb{R}^{n \times n}$ be a fixed $H_0$-selfadjoint matrix where $H_0^{T}=H_0 \in \mathbb{R}^{n \times n}$. Let
	\begin{displaymath}
	(A_0, H_0)\overset{R_0}{\mapsto}(J_{R_0},P)
	\end{displaymath}
	be the mapping of Theorem~\ref{thm:real main} that is the columns of $R_0$ capture the $\it{RC}$ basis of $A_0$.
	There exist constants $K,\delta>0$ $($depending on $A_0$ and $H_0$ only$)$ such that the following assertion holds. For any real pair $(A,H)$ such that $A$ is $H$-selfadjoint and $A$ has the same Jordan structure as $A_0$ with
	\begin{equation*}
	\| A-A_0 \| + \| H-H_0 \| < \delta,
	\end{equation*}
 there exists a matrix $R$ such that
	\begin{displaymath}
	(A, H)\overset{R}{\mapsto}(J_{R},P),
	\end{displaymath}  whose columns capture the RC basis of $A$, and  
	\begin{equation*}
	\| R-R_0 \| \leq {K}(\| A-A_0 \| + \| H-H_0 \|).
	\end{equation*}
	Moreover, there exists a real weak affiliation matrix $F$, i.e.
$
	(A, H)\overset{F}{\mapsto}_w(A_0,H_0)$ such that 
	\begin{equation*}
	\| I-F \| \leq {K}(\| A-A_0 \| + \| H-H_0 \|).
	\end{equation*}
	
\end{theorem}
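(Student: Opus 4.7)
The plan is to mimic the proof of Theorem \ref{stabilityRC}, but replace each invocation of the strong-affiliation stability results by their weak-affiliation counterparts. The only substantially new ingredient is constructing the real weak affiliation matrix $F$ from the matrices $R$ and $R_0$ already produced by the RC construction and showing it is close to the identity.

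First, I would set up the diagram exactly as in the proof of Theorem \ref{stabilityRC}. By Lemma \ref{thm:focs-rc}, define $T_0 = R_0 S^{-1}$, so that the columns of $T_0$ capture the $i$-FOCS basis of $A_0$, i.e.\ $(A_0,H_0) \overset{T_0}{\mapsto}(J,P)$. Now apply Theorem \ref{weakstability} (the weak stability of the $\gamma$-FOCS basis) instead of Theorem \ref{thm:main2}: this yields a real matrix $N$ with $(A,H)\overset{N}{\mapsto}_w (J,P)$ whose columns capture the $i$-FOCS basis of $A$, together with the bound $\|N-T_0\|\le \hat K(\|A-A_0\|+\|H-H_0\|)$. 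Setting $R=NS$ and invoking Lemma \ref{thm:focs-rc} once more, $R$ is real and its columns capture the RC basis of $A$, giving $(A,H)\overset{R}{\mapsto}_w(J_R,P)$. The desired Lipschitz bound then drops out by submultiplicativity,
\begin{equation*}
\|R-R_0\|=\|(N-T_0)S\|\le \|S\|\,\|N-T_0\|\le K(\|A-A_0\|+\|H-H_0\|),
\end{equation*}
with $K=\|S\|\hat K$.

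Next, I would construct $F$ explicitly as $F=R R_0^{-1}$. This is manifestly real since both $R$ and $R_0$ are real. To verify $F$ is a weak affiliation matrix of $(A,H,A_0,H_0)$, note that because $(A_0,H_0)\overset{R_0}{\mapsto}(J_R,P)$ (strong affiliation) and $(A,H)\overset{R}{\mapsto}_w(J_R,P)$ (weak affiliation), one computes
\begin{equation*}
F^{*}HF=R_0^{-*}(R^{*}HR)R_0^{-1}=R_0^{-*}P R_0^{-1}=R_0^{-*}(R_0^{*}H_0 R_0)R_0^{-1}=H_0,
\end{equation*}
while $F^{-1}AF=R_0(R^{-1}AR)R_0^{-1}$ shares the Jordan basis structure with $R_0 J_R R_0^{-1}=A_0$, so $(A,H)\overset{F}{\mapsto}_w(A_0,H_0)$ as required. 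The bound on $\|I-F\|$ is then immediate from what we have:
\begin{equation*}
\|I-F\|=\|(R_0-R)R_0^{-1}\|\le \|R_0^{-1}\|\,\|R-R_0\|\le K'(\|A-A_0\|+\|H-H_0\|),
\end{equation*}
absorbing $\|R_0^{-1}\|$ (a constant depending only on $A_0,H_0$) into the overall constant.

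The principal obstacle is conceptual rather than computational: one must be careful that in the weak-affiliation setting the intermediate equalities $R^*HR=P$ and $R_0^*H_0R_0=P$ still hold (the $P$-part of the weak affiliation is an \emph{equality}, only the $J_R$-part is relaxed to "same Jordan basis"), so the cancellation in $F^*HF=H_0$ is valid. Everything else reduces to elementary inequalities of the $\Theta$-calculus type used in Section 4, plus the already-established Theorem \ref{weakstability}; no new perturbation analysis is needed because the fixed matrix $S$ from Lemma \ref{thm:focs-rc} is independent of $A$ and $H$ and therefore contributes only a constant factor to every bound.
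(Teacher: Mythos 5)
Your proof is correct and, for the main RC bound, follows exactly the route the paper takes: replace the invocation of Theorem~\ref{thm:main2} in the proof of Theorem~\ref{stabilityRC} by Theorem~\ref{weakstability}, and let the fixed matrix $S$ of Lemma~\ref{thm:focs-rc} carry everything over with only a constant factor. For the ``Moreover'' clause the paper offers no argument at all (it only says the proof ``follows the same lines''), so your explicit construction $F=RR_0^{-1}$, with the verification that $F^{*}HF=H_0$ via the exact equalities $R^{*}HR=P=R_0^{*}H_0R_0$ and the bound $\|I-F\|\le\|R_0^{-1}\|\,\|R-R_0\|$, actually supplies a detail the paper leaves implicit; it is the natural analogue of the construction in \cite[Theorem~1.8]{BOP} and is sound.
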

{
The proofs of  Theorem~\ref{weakstability} and Theorem {\ref{weakstabilityRC}} follow exactly the same lines as the proofs of Theorem~{\ref{thm:main2}} and Theorem~{\ref{stabilityRC}}, respectively. For example, in the proof of Theorem {\ref{thm:main2}}  we used Proposition~{\ref{thm:stabilityFO}}. Now, we apply the result of Proposition {\ref{thm:stabilityweak}} instead, in order to get the proof of Theorem~\ref{weakstability}. All the rest remains the same.}





\end{document}